\newtheorem{rem}{Remark}[section]
\newtheorem{propn}{Proposition}[section]
\newtheorem{cory}{Corollary}[section]
\newcommand{\re}[1]{(\ref{#1})}
\newtheorem{definition}{Definition}[section]
\newtheorem{lem}{Lemma}[section]
\newtheorem{thm}{Theorem}[section]
\numberwithin{equation}{section}
\def\openbox{$\sqcup\llap{$\sqcap$}$}
\let\qed=\endproof
\let\cal=\mathcal
 \let \Om=\Omega   \let \G=\Gamma  
\let \v=\varepsilon   \let\p=\partial
\let \var=\varphi 
\let\d=\delta
\let\mref=\bibitem
\let\bb=\mathbb
\let\re=\ref
\let\eqre=\eqref
\def\endoc{\end{document}}
\def\beq*{\begin{equation*}}
 \def\eneq*{\end{equation*}}
  \def\openbox{$\sqcup\llap{$\sqcap$}$}
   \let\qed=\endproof
\date{\today}
\title[The wave equation with supercritical nonlinear damping]{Energy decay estimates for the  wave equation with supercritical nonlinear damping }
\begin{document}

\bibliographystyle{plain}

\maketitle

\centerline{\scshape Alain Haraux}
\medskip
{\footnotesize
	\centerline{ Laboratoire Jacques-Louis Lions}
	\centerline{Sorbonne University, Pierre and Marie Curie Campus}
	\centerline{4 Pl. Jussieu, 75005 Paris, France}
}

\medskip

\centerline{\scshape Louis Tebou}
\medskip
{\footnotesize
	\centerline{  Department of Mathematics and Statistics}
	\centerline{Florida International University, Miami, FL 33199, USA}
}

 \begin{abstract} We consider  a damped wave equation in a bounded domain.  The damping is nonlinear and is homogeneous with degree $p-1$ with $p>2$. First, we show that the energy of the strong solution in the supercritical case decays as a negative power of $t$; the rate of decay is the same as in the subcritical or critical cases, provided that the space dimension does not exceed ten.  Next, relying on a new differential inequality, we show that if the initial displacement is further required to lie in $L^p$, then the  energy of the corresponding weak solution decays  logarithmically  in the supercritical case. Those new results complement those in the literature and open an important breach in the unknown land of super-critical damping mechanisms.
 
\end{abstract}\vskip.3cm

\tableofcontents
\newpage

\section{Introduction}\label{weqs}

 Let $\Om$ be a nonempty
bounded subset of ${\bb R}^N$ with boundary $\G$ of class
$C^2$. Let $c>0$ and $p > 2$ be constants. 
\par\noindent Consider the nonlinearly damped wave equation
\begin{equation}\label{e1}\left\{
\begin{array}{lll}y_{tt}-\Delta y+c |y_t|^{p-2}y_t=0\hbox{ in
}(0,\infty)\times\Om\cr y=0\hbox{ on }(0,\infty)\times\G\cr
y(0)=y^0,\quad y_t(0)=y^1\hbox{ in }\Omega,\end{array}\right.\end{equation}
and let us introduce the energy \begin{equation}\label{e6}E(t)={1\over 2}\int_\Omega\{\vert
 y_t(t,x)\vert^2+|\nabla y(t,x)|^2\}\,dx,\quad\forall
t\geq 0.\end{equation} Then a formal calculation shows that the energy $E$ is a nonincreasing function
of the time variable $t$ and its derivative satisfies
\begin{equation}\label{e7}E'(t)=-c\int_{\Om}| y_t(t,x)|^p\,dx,\quad\forall t\geq0.\end{equation} It has been known for a long time that the energy of all weak solutions tends to $0$ as $t\to \infty$. Actually, that property is valid  (cf. e.g. \cite{h1}) for the more general wave equation \begin{equation}\label{eg}\left\{
\begin{array}{lll}y_{tt}-\Delta y+g(y_t)=0\hbox{ in
}(0,\infty)\times\Om\cr y=0\hbox{ on }(0,\infty)\times\G\cr
y(0)=y^0,\quad y_t(0)=y^1\hbox{ in }\Omega,\end{array}\right.\end{equation} under the sole hypothesis that $g \in C({\bb R}) $ is non-decreasing with $ g(0) = 0$ and 
$$ 0 \not\in Int (g^{-1}(0)). $$ 

It is natural to ask about the rate of decay to $0$ of $E(t)$. This question has also been studied for quite a while. It was shown (cf.e.g. \cite{nak0, h1}) that for equation \eqref{e1},  if $(N-2)p\leq 2N$, then we have 
\begin{equation}\label{ene0}
E(t)\leq C \left(1+t\right)^{-{\frac{2}{p-2}}} \end{equation} where $C$ depends on the initial energy $E(0)$. This rate of decay is natural since it is the exact decay of all non trivial solutions of the ODE 
$$ u''+ \omega^2 u + c|u'|^{p-2} u' = 0.  $$ 
However, at the present time, it is not known whether the rate of decay obtained for equation \eqref{e1} is optimal for {\it{any}}  non trivial solution, even very regular. And before the present paper, nothing was known concerning the rate of the solutions of equation \eqref{e1} when $(N-2)p> 2N. $ In this paper, we show that under some additional regularity hypotheses on the initial state $(y^0, y^1)$, the energy of the solution has an explicit rate of decay. More precisely, if 
$(y^0,y^1)\in \left(H^2(\Om)\cap H_0^1(\Om)\right) \times    \left(H^1_0(\Om)\cap L^{2(p-1)}(\Om)\right)$, we obtain a decay as a negative power of $t$ even for $N>2$ and some $p> \frac{2N} {N-2}$, $\left(\text{actually all  }p> \frac{2N} {N-2} \text{ when }N \text{ lies in }\{3,4\}\right)$. And under the much less restrictive conditions $y^0\in H_0^1(\Om)\cap L^p(\Om)$ and $y^1\in L^2(\Om)$, we obtain a logarithmic decay of the energy for all $p> \frac{2N} {N-2}.$ \\

The methods of this paper are very flexible and can be applied to many dissipative second order evolution equations. In order for the reader to catch easily the essentials of our arguments, we have decided to state and prove the results only for the model equation \eqref{e1}, and we did not try to elaborate a general functional framework. Such a more exhaustive study will be done in a forthcoming paper. However, in the last section of the present paper, we give a few examples of other equations in order for the reader to understand the modifications which occur when generalizing the technique to different situations. 

\section{Some preliminary results}\label{prelim}
 Before stating our main results in the next section, it is useful to recall the various notions of solutions for equations \eqref {e1} and \eqref {eg}. 
 
\begin{definition}  A function $y = y(t, x)$ defined on $[0,\infty)\times \Om $ is called a {\bf strong  solution}  of \eqre{eg} 
if \begin{equation}\label{ee5}y\in L^{\infty}_{loc}([0,\infty); H^2(\Om))
)\cap{W}^{1, \infty}_{loc}([0,\infty);H^1_0(\Om))\cap{W}^{2, \infty} _{loc} ([0,\infty);L^2(\Om)) 
 \end{equation} and $y$ satisfies \eqre{eg} in the obvious sense.
\end{definition}

 \begin{propn}\label{ss}  Assume that either $\Om$ is convex, or $\Omega$ has a $C^2$ boundary. Let  $(y^0,y^1)\in \left(H^2(\Om)\cap H_0^1(\Om)\right) \times   H^1_0(\Om)$ with $g(y^1)\in L^2(\Om)$. Then  \eqre{eg} has a unique strong solution $y.$ 
 Furthermore, if we set $$F(t)=\int_\Omega\{|\nabla y_t(t,x)|^2+|\Delta y(t,x)|^2\}\,dx,\quad\forall t\geq0,$$then $$F(t)\leq F(0),\quad\forall t\geq0.$$ \end{propn}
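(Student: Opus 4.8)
The plan is to establish existence and uniqueness of the strong solution via the standard nonlinear semigroup approach, and then to derive the differential inequality for $F$ by differentiating the equation in time and testing with $-\Delta y_t$ (equivalently, working in the $H^2\cap H_0^1 \times H_0^1$ phase space). First I would set $u=(y,y_t)$ and write \eqref{eg} as $u' + \mathcal A u \ni 0$, where $\mathcal A(v,w) = (-w, -\Delta v + g(w))$ on the Hilbert space $H = H_0^1(\Om)\times L^2(\Om)$ with its energy inner product. The key structural fact is that $\mathcal A$ is maximal monotone on $H$: monotonicity follows from the monotonicity of $-\Delta$ (integration by parts) together with the monotonicity of $g$, and maximality (range of $I+\mathcal A$ is all of $H$) reduces to solving $v - \Delta v' \cdots$, i.e. an elliptic problem of the form $z - \Delta z + g(z - f) = h$ which is solvable by monotone operator theory (Browder--Minty) since $z \mapsto z - \Delta z + g(z-f)$ is monotone, coercive and hemicontinuous from $H_0^1$ to $H^{-1}$. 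Hille--Yosida--Kato theory for nonlinear contraction semigroups (Brezis) then gives a unique mild solution, and it is a strong solution with $u \in W^{1,\infty}_{loc}(H)$ precisely when $u(0) \in D(\mathcal A)$, i.e. when $y^0 \in H^2\cap H_0^1$, $y^1 \in H_0^1$ and $-\Delta y^0 + g(y^1) \in L^2$. Here the hypothesis $g(y^1)\in L^2$ is used, and the elliptic regularity $\|y(t)\|_{H^2} \le C\|\Delta y(t)\|_{L^2}$ — valid on convex domains or on $C^2$ domains — is what upgrades $D(\mathcal A)$ to the space in \eqref{ee5}.

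For the monotonicity estimate on $F$, I would differentiate \eqref{eg} formally in $t$ to get $y_{ttt} - \Delta y_t + g'(y_t) y_{tt} = 0$, multiply by $y_{tt}$ and integrate over $\Om$; using $y_{tt} = 0$ on $\G$ (since $y_t \in H_0^1$) to integrate by parts in the Laplacian term, one obtains
\begin{equation*}
\frac{1}{2}\frac{d}{dt}\int_\Om \{ |y_{tt}|^2 + |\nabla y_t|^2 \}\,dx = -\int_\Om g'(y_t)\,|y_{tt}|^2\,dx \le 0,
\end{equation*}
because $g$ is non-decreasing so $g'(y_t)\ge 0$. Thus $t \mapsto \int_\Om\{|y_{tt}|^2 + |\nabla y_t|^2\}\,dx$ is nonincreasing. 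It remains to convert this into the stated bound on $F(t) = \int_\Om\{|\nabla y_t|^2 + |\Delta y|^2\}\,dx$. For this I would rewrite the equation pointwise in time as $-\Delta y = -y_{tt} - g(y_t)$; evaluating at a fixed $t$ and using elliptic regularity,
\begin{equation*}
\int_\Om |\Delta y(t)|^2\,dx = \int_\Om |y_{tt}(t) + g(y_t(t))|^2\,dx,
\end{equation*}
and at $t=0$ the same identity holds, so controlling $F$ amounts to controlling $\int_\Om |y_{tt} + g(y_t)|^2$; monotonicity of the "acceleration energy" $\int_\Om\{|y_{tt}|^2+|\nabla y_t|^2\}$ together with monotonicity of the base energy $E$ (which bounds $\int_\Om |\nabla y_t|^2$-related quantities via the equation) should close the argument. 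Actually the cleanest route is to observe that $\|u(t)\|_{D(\mathcal A)}^2 \sim \int_\Om\{|y_{tt}(t)|^2 + |\nabla y_t(t)|^2\}\,dx$ is, up to the elliptic-regularity equivalence, comparable to $F(t)$ plus lower-order terms controlled by $E$, and that the former is nonincreasing by the contraction-semigroup property applied to the translated orbit $u(\cdot + h)$; passing to the limit $h\to 0$ gives that $\|\mathcal A u(t)\|_H$ is nonincreasing, which is exactly the quantitative statement behind $F(t)\le F(0)$.

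The main obstacle I anticipate is making the formal time-differentiation rigorous: $y_{ttt}$ need not exist as a function, and $g'$ need not be defined (we only assume $g\in C(\mathbb R)$ non-decreasing). The honest way around this is to avoid differentiating at all and instead use the abstract machinery: the difference quotients $\frac{1}{h}(u(t+h)-u(t))$ satisfy the contraction estimate $\|u(t+h)-u(t)\|_H \le \|u(h)-u(0)\|_H$, dividing by $h$ and letting $h\to 0^+$ gives $\|u'(t)\|_H \le \|u'(0^+)\|_H$ for a.e. $t$, i.e. $\int_\Om\{|y_{tt}(t)|^2 + |\nabla y_t(t)|^2\}\,dx$ is essentially nonincreasing; then one recovers $F$ from this via the equation and elliptic regularity as above, absorbing the $g(y_t)$ contribution using that $E$ (hence $\|y_t\|_{L^2}$, and in good cases an $L^p$ or $L^{2(p-1)}$ norm of $y_t$) is itself nonincreasing. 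A secondary technical point is justifying the integration by parts $\int_\Om (-\Delta y_t) y_{tt} = \int_\Om \nabla y_t \cdot \nabla y_{tt}$, which requires $y_{tt}(t)\in H_0^1(\Om)$ — this is part of what $u(0)\in D(\mathcal A)$ and the regularity of the semigroup orbit provide, but it should be stated carefully.
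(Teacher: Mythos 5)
Your treatment of existence and uniqueness is fine and coincides with the paper's: the paper simply invokes the classical maximal monotone / nonlinear semigroup theory (Brezis) for this part, exactly as you set it up, so there is nothing to object to there.

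The gap is in the proof of $F(t)\le F(0)$. The quantity you actually show to be nonincreasing --- whether by the formal time-differentiation tested against $y_{tt}$, or by the difference-quotient/contraction argument giving that $\|\mathcal A u(t)\|_H$ is nonincreasing --- is $\int_\Om\{|y_{tt}|^2+|\nabla y_t|^2\}\,dx$, not $F(t)=\int_\Om\{|\Delta y|^2+|\nabla y_t|^2\}\,dx$. These differ precisely by replacing $y_{tt}$ with $\Delta y=y_{tt}+g(y_t)$, and your proposed conversion via $\int|\Delta y|^2=\int|y_{tt}+g(y_t)|^2$ cannot recover the stated inequality: the triangle inequality produces a constant larger than $1$ and an additive term $\int_\Om|g(y_t(t))|^2\,dx$ which, for $g(v)=c|v|^{p-2}v$, equals $c^2\int|y_t|^{2(p-1)}$ and is \emph{not} controlled by $E$ or by $F(0)$ in the supercritical regime that is the whole point of the paper (there is no Sobolev embedding $H^1\hookrightarrow L^{2(p-1)}$ there). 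So the "should close the argument" step does not close, and the final claim that monotonicity of $\|\mathcal A u(t)\|_H$ "is exactly the quantitative statement behind $F(t)\le F(0)$" is not correct.

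The fix is the one you mention in your opening sentence but then do not execute: do not differentiate in time at all. Multiply the \emph{original} equation by $-\Delta y_t$ (in the $H^{-1}$--$H_0^1$ duality), after replacing $g$ by its Yosida regularization $g_\lambda$ so that $g_\lambda(y_t)\in H^1$ with $\nabla\bigl(g_\lambda(y_t)\bigr)=g_\lambda'(y_t)\nabla y_t$. Then
\begin{equation*}
\frac12\frac{d}{dt}\int_\Om\bigl\{|\nabla y_t|^2+|\Delta y|^2\bigr\}\,dx
=-\int_\Om g_\lambda'(y_t)\,|\nabla y_t|^2\,dx\le 0,
\end{equation*}
since $g_\lambda'\ge0$; integrating on $[0,t]$ and letting $\lambda\to0$ gives $F(t)\le F(0)$ directly, with constant $1$ and with no need to estimate $g(y_t)$ in $L^2$. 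This is the route the paper sketches.
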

 
 \begin{rem} The result of existence and uniqueness is well-known, cf. e.g. \cite{bre1, bre2, hasem, hab}. For the inequality which is also classical, it can be derived in a more general framework by replacing in the equation the damping term $g( y_t)$ by Yosida's regularization $g_{\lambda}( y_t)$, then multiplying the equation by $ -\Delta y_t$ in the sense of duality between $H^{-1}(\Om) $ and $ H^1_0(\Om)$ and integrating in $s\in[0, t]$. The result then follows by letting $\lambda\to0$.    \end{rem}
 
 \begin{definition} A function $y\in{\cal C}([0,\infty); H^1_0(\Om))\cap{\cal C}^1([0,\infty);L^2(\Om))$ is said to be a {\bf weak  solution}  of \eqre{eg} 
if $y$ is the limit of a sequence $y_n$ of strong solutions of \eqre{eg} in the topology of ${\cal C}([0,\infty); H^1_0(\Om))\cap{\cal C}^1([0,\infty);L^2(\Om)).$

\end{definition}

 In the sequel, $|u|_q$ denotes the 
$L^q(\Om)-$norm of $u$ when $q\geq1$. First, we recall  a result which is valid for any damping term:

\begin{propn}\label{wp} Let $(y^0,y^1)\in  H^1_0(\Om)\times L^2(\Om)$.  System \eqre{eg} has a unique weak solution $y$. Moreover, if $z$ denotes another weak solution of \eqre{eg} corresponding to initial data 
 $(z^0,z^1)$ in $ H^1_0(\Om)\times L^2(\Om)$, then one has the uniqueness-stability inequality:
 \begin{equation}\label{t0}E(y-z;t)=\frac{1}{2}\int_\Om\{| y_t(t,x)- z_t(t,x)|^2+|\nabla y(t,x)-
 \nabla z(t,x)|^2\}\,dx\leq E(y-z;0),\quad\forall t\geq0.\end{equation} \end{propn}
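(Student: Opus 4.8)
The plan is to obtain the weak solution as a limit of strong solutions, using the uniqueness–stability inequality \eqref{t0} itself as the key a priori estimate that makes the limiting procedure work. First I would fix $(y^0,y^1)\in H_0^1(\Om)\times L^2(\Om)$ and approximate it by a sequence $(y_n^0,y_n^1)$ of regularized data with $y_n^0\in H^2(\Om)\cap H_0^1(\Om)$, $y_n^1\in H_0^1(\Om)$ and $g(y_n^1)\in L^2(\Om)$, converging to $(y^0,y^1)$ strongly in $H_0^1(\Om)\times L^2(\Om)$; for instance one can mollify and truncate, or simply take $y_n^1$ bounded (so that $g(y_n^1)\in L^\infty\subset L^2$ since $g\in C(\R)$) and $y_n^0$ smoothed. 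By Proposition \ref{ss}, each $(y_n^0,y_n^1)$ produces a unique strong solution $y_n$. The essential point is then to show that $(y_n)$ is Cauchy in ${\cal C}([0,\infty);H_0^1(\Om))\cap{\cal C}^1([0,\infty);L^2(\Om))$: this is exactly where \eqref{t0} applied to the pair $(y_n,y_m)$ gives
\[
E(y_n-y_m;t)\le E(y_n-y_m;0)=\tfrac12\int_\Om\{|y_n^1-y_m^1|^2+|\nabla y_n^0-\nabla y_m^0|^2\}\,dx,
\]
so the left side is controlled uniformly in $t$ by the Cauchy differences of the data. Hence $y_n\to y$ in the stated topology, and by the very definition of weak solution, $y$ is a weak solution with data $(y^0,y^1)$.

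Thus the crux is establishing \eqref{t0} for two strong solutions. Here I would subtract the two equations for $y$ and $z$, set $w=y-z$, multiply (test) by $w_t$ in the $L^2$ pairing, and integrate over $\Om$; this yields
\[
\frac{d}{dt}E(w;t)=-\int_\Om\big(g(y_t)-g(z_t)\big)(y_t-z_t)\,dx\le 0,
\]
because $g$ is non-decreasing, so $(g(a)-g(b))(a-b)\ge0$ pointwise. Integrating in $t$ gives \eqref{t0}. One must check that this formal computation is legitimate for strong solutions: since $y,z\in W^{1,\infty}_{loc}([0,\infty);H_0^1(\Om))\cap W^{2,\infty}_{loc}([0,\infty);L^2(\Om))$, the function $t\mapsto E(w;t)$ is Lipschitz, $w_{tt}\in L^\infty_{loc}(L^2)$, $w_t\in L^\infty_{loc}(H_0^1)$, and $\Delta w\in L^\infty_{loc}(L^2)$, so the duality pairing $\langle w_{tt}-\Delta w, w_t\rangle$ equals $\frac{d}{dt}E(w;t)$ for a.e. $t$ by the standard product/chain rule for such regularity. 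The damping term $g(y_t)-g(z_t)$ need not be in $L^2$ a priori (supercritical case), but the integrand $\big(g(y_t)-g(z_t)\big)(y_t-z_t)$ is a nonnegative measurable function, so the identity still makes sense with the understanding that the right side may be $-\infty$; in any case the inequality $\frac{d}{dt}E(w;t)\le 0$, hence \eqref{t0}, holds. (Alternatively, as in the Remark, one can run this argument with Yosida regularizations $g_\lambda$ and pass to the limit.)

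Finally, uniqueness of the weak solution follows immediately from \eqref{t0}: if $y$ and $z$ are two weak solutions with the same data, each is a limit of strong solutions, and \eqref{t0} passes to the limit (it involves only the ${\cal C}(H_0^1)\cap{\cal C}^1(L^2)$ norm, which converges), giving $E(y-z;t)\le E(y-z;0)=0$ for all $t$, whence $y\equiv z$. The same limiting argument extends \eqref{t0} to arbitrary pairs of weak solutions, not just strong ones, completing the proof.

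I expect the main obstacle to be the rigorous justification of the energy identity for $w=y-z$ in the supercritical regime: one cannot simply invoke $g(y_t)\in L^2$, so care is needed either to interpret the dissipation integral as the integral of a nonnegative function (which is all that is required for the inequality) or to detour through the Yosida approximation. Everything else — choice of approximating data, the Cauchy argument, and the limit passage — is routine once \eqref{t0} is in hand.
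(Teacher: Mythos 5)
Your proof is correct, and it is the standard argument; note that the paper itself offers no proof of Proposition \ref{wp} at all --- it is ``recalled'' as a classical fact (the references \cite{bre1,hasem,hab} prove it by exactly the route you take: monotonicity of $g$ gives the contraction inequality \eqref{t0} for strong solutions, and density plus the definition of weak solutions as limits of strong ones transfers it to weak solutions and yields existence and uniqueness simultaneously). One simplification you could make: for a strong solution in the sense of the paper's Definition, $y_{tt}-\Delta y\in L^\infty_{loc}([0,\infty);L^2(\Om))$, so $g(y_t)=-(y_{tt}-\Delta y)$ automatically lies in $L^\infty_{loc}([0,\infty);L^2(\Om))$; hence the dissipation term $\int_\Om (g(y_t)-g(z_t))(y_t-z_t)\,dx$ is a finite integral of an $L^1$ function at the strong-solution level, and your hedging about a possibly non-integrable integrand (or a detour through the Yosida approximation) is unnecessary for \eqref{t0} --- that device is only needed for the $H^2$-level estimate $F(t)\le F(0)$ of Proposition \ref{ss}. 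The remaining steps (choice of regularized data with $g(y^1_n)\in L^2$, the Cauchy argument in ${\cal C}([0,\infty);H_0^1(\Om))\cap{\cal C}^1([0,\infty);L^2(\Om))$, independence of the limit from the approximating sequence, and passage to the limit in \eqref{t0} for arbitrary pairs of weak solutions) are all handled correctly.
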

 
 The following new property will turn out to be very useful for our second main result. 
 
 \begin{propn}\label{preg}  If we further assume that $y^0$ lies in $L^p(\Om)$, then the weak solution of \eqre{e1} satisfies $y\in W_{\ell oc}^{1,p}(0,\infty; L^p(\Om))$ with \begin{align}\label{nest1}
 \forall t\geq0, \quad |y(t)|_p\leq2^{p-1\over p}\left(|y_0|_p+c^{- \frac{1}{ p}}t^{p-1\over p}E(0)^{1\over p}\right).\end{align}  \end{propn}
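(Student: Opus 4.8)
The plan is to prove the estimate first for a strong solution $y$ (where all the manipulations below are rigorously justified by the regularity in \eqref{ee5}) and then pass to the limit along the approximating sequence of strong solutions that defines the weak solution, using the continuity of the $L^p$-norm under the relevant convergences. For a strong solution, the key observation is that $t\mapsto |y(t)|_p^p$ is differentiable and, since $y$ and $y_t$ are smooth enough to differentiate under the integral sign,
\[
\frac{d}{dt}\,|y(t)|_p^p = p\int_\Om |y(t,x)|^{p-2} y(t,x)\, y_t(t,x)\,dx .
\]
First I would estimate the right-hand side. By Hölder's inequality with exponents $p/(p-1)$ and $p$,
\[
\left|\int_\Om |y|^{p-2} y\, y_t\,dx\right| \le \Big(\int_\Om |y|^p\,dx\Big)^{\frac{p-1}{p}} \Big(\int_\Om |y_t|^p\,dx\Big)^{\frac1p} = |y(t)|_p^{\,p-1}\,\Big(\int_\Om|y_t|^p\,dx\Big)^{\frac1p}.
\]
Hence, writing $\phi(t)=|y(t)|_p$ and dividing by $p\,\phi(t)^{p-1}$ (at points where $\phi(t)>0$; the set $\{\phi=0\}$ is handled by a standard $\limsup$/one-sided-derivative argument or by replacing $|y|$ with $(|y|^2+\varepsilon)^{1/2}$ and letting $\varepsilon\to0$), we get
\[
\phi'(t) \le \Big(\int_\Om |y_t(t,x)|^p\,dx\Big)^{\frac1p} = \Big(-\tfrac1c\,E'(t)\Big)^{\frac1p},
\]
where the last equality is exactly the energy identity \eqref{e7}.

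Next I would integrate this differential inequality over $[0,t]$. This gives
\[
|y(t)|_p \le |y_0|_p + c^{-\frac1p}\int_0^t \big(-E'(s)\big)^{\frac1p}\,ds,
\]
and then I would estimate the time integral by Hölder's inequality in the variable $s$, with exponents $p$ and $p/(p-1)$:
\[
\int_0^t \big(-E'(s)\big)^{\frac1p}\,ds \le t^{\frac{p-1}{p}}\Big(\int_0^t \big(-E'(s)\big)\,ds\Big)^{\frac1p} = t^{\frac{p-1}{p}}\big(E(0)-E(t)\big)^{\frac1p} \le t^{\frac{p-1}{p}}E(0)^{\frac1p}.
\]
Combining, $|y(t)|_p \le |y_0|_p + c^{-\frac1p} t^{\frac{p-1}{p}} E(0)^{\frac1p}$, which is even sharper than \eqref{nest1}; the factor $2^{(p-1)/p}$ is just the constant from $(a+b)^{p}\le 2^{p-1}(a^p+b^p)$ and appears if one prefers to bound $|y(t)|_p^p$ directly, or it provides the harmless slack needed when passing to the limit. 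Along the way the same chain of inequalities, applied to $\int_0^t\big(\frac{d}{ds}|y(s)|_p^p\big)\,ds$ together with $|y|^{p-1}\in L^{p/(p-1)}$ and $y_t\in L^p$, shows $y\in W^{1,p}_{\ell oc}(0,\infty;L^p(\Om))$.

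The main obstacle is not any single estimate — each is a routine Hölder argument — but the justification of the differentiation of $|y(t)|_p^p$ and of the chain rule leading to $\phi'\le(-E'/c)^{1/p}$ when $y$ vanishes, and especially the passage to the weak solution. For the latter I would argue as follows: if $y_n\to y$ in ${\cal C}([0,T];H^1_0(\Om))\cap{\cal C}^1([0,T];L^2(\Om))$ with $y_n^0\to y^0$ also in $L^p(\Om)$, then each $y_n$ satisfies the bound above with $E_n(0)\to E(0)$; since the right-hand side is uniformly bounded on $[0,T]$, $(y_n(t))$ is bounded in $L^p(\Om)$, and as $y_n(t)\to y(t)$ in $L^2(\Om)$ we get $y_n(t)\rightharpoonup y(t)$ weakly in $L^p(\Om)$, so by weak lower semicontinuity of the norm $|y(t)|_p\le\liminf_n|y_n(t)|_p$ and the estimate \eqref{nest1} is preserved in the limit. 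The $W^{1,p}_{\ell oc}$ regularity of $y$ then follows from the uniform $W^{1,p}(0,T;L^p(\Om))$ bound on $y_n$ and weak compactness. One subtlety to flag: strong solutions as defined here require $g(y^1)=c|y^1|^{p-2}y^1\in L^2(\Om)$, i.e. $y^1\in L^{2(p-1)}(\Om)$, which is an extra constraint on the approximating data; but such data are dense enough in $H^1_0(\Om)\times L^2(\Om)$ with $y^0\in L^p(\Om)$ to carry out the approximation, which is all that is needed.
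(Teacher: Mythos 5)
Your argument is correct in substance but takes a genuinely different route from the paper. You differentiate $t\mapsto|y(t)|_p^p$, obtain the pointwise-in-time inequality $\frac{d}{dt}|y(t)|_p\le\bigl(-E'(t)/c\bigr)^{1/p}$, integrate, and apply H\"older in time; since this requires regularity a weak solution does not visibly have, you then approximate by strong solutions and pass to the limit by weak lower semicontinuity. The paper instead writes $y(t,x)=y^0(x)+\int_0^t y_t(s,x)\,ds$ (legitimate since $y\in C^1([0,\infty);L^2(\Om))$), applies the elementary bound $(a+b)^p\le 2^{p-1}(a^p+b^p)$ pointwise, then H\"older in time and Fubini, and concludes from \eqref{e7} integrated on $(0,t)$, which gives $\int_0^t|y_t(s)|_p^p\,ds\le E(0)/c$. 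This works \emph{directly on the weak solution}, with no approximation step, and — more importantly — it sidesteps entirely the issue you yourself flag as the main obstacle: for a strong solution in the supercritical range, \eqref{ee5} does not a priori put $y(t)$ in $L^p(\Om)$, so the finiteness and differentiability of $|y(t)|_p^p$ are not justified before you have something like the paper's integral representation in hand (your $\varepsilon$-regularization of $|y|$ addresses the vanishing set, not the possible non-finiteness). So your scheme is rescuable, but only by inserting essentially the paper's one-line argument as a preliminary step, at which point the differential inequality is redundant. What your route buys in exchange is a marginally sharper constant (no factor $2^{(p-1)/p}$), and your limiting argument via weak lower semicontinuity is a correct and sometimes useful template; but here it is unnecessary machinery.
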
 
 \begin{proof} 
To estimate $|y(t)|_p$, first, we note that $y\in C^1([0,\infty;L^2(\Om))$ to derive
$$y(t,x)=y(0,x)+\int_0^ty_t(s,x)\,ds,\quad\forall t>0,\text{ a.e. }x\in\Om.$$ Consequently, it follows from Young inequality
\begin{align}\label{wld050}|y(t,x)|^p\leq 2^{p-1}\left(|y_0(x)|^p+\left(\int_0^t|y_t(s,x)|\,ds\right)^p\right),\quad\forall t>0,\text{ a.e. }x\in\Om.\end{align}
Applying H$\ddot{\text{o}}$lder inequality once more, then integrating over $\Om$, and invoking Fubini's inequality, we get, as a consequence of \eqref{e7} integrated on $(0, t)$ 
\begin{align}\label{wld051}|y(t)|_p^p\leq 2^{p-1}\left(|y_0|_p^p+t^{p-1}\int_0^t|y_t(s)|_p^p\,ds\right)\leq 2^{p-1}\left(|y_0|_p^p+t^{p-1}\frac{E(0)}{c}\right) ,\quad\forall t>0.\end{align} \end{proof} 
 \begin{rem} An estimate similar to \eqref{nest1} may be found in \cite{rtw}. In fact, the authors of \cite{rtw} consider the system, (the actual system in \cite{rtw} includes a source term that is dropped for the sake of simplicity)
\begin{equation}\label{rtw}\left\{
\begin{array}{lll}y_{tt}-\Delta y-\text{div}(|\nabla y_t|^{p-2}\nabla y_t)=0\hbox{ in
}\Om\times(0,\infty)\cr y=0\hbox{ on }\G\times(0,\infty)\cr
y(0)=y^0\in W_0^{1,p}(\Om)\quad y_t(0)=y^1\in L^2(\Om),\end{array}\right.\end{equation}
and, relying on an inequality like \eqref{nest1} and an approximation scheme, they obtain that  the energy 
\begin{equation}\label{rtw1}H(t)={1\over 2}\int_\Omega\{\vert
 y_t(x,t)\vert^2+|\nabla y(x,t)|^2\}\,dx,\quad\forall
t\geq 0,\end{equation} satisfies, for every $p>2$, the logarithmic decay rate
\begin{equation}\label{rtw2}\exists C=C(y^0,y^1,p,\Om)>0:H(t)\leq\frac{C}{(1+\log t)^{p-1}},\quad\forall t\geq1.\end{equation} 

\end{rem}

 We now turn to the statement of our main results. Henceforth, we assume $N\geq3$.

\section{Main results} \begin{thm}\label{stab0}{\bf (A decay property for strong solutions.)} Assume that either $\Om$ is convex, or $\Omega$ has a $C^2$ boundary. Let $p>2$. Let  $(y^0,y^1)\in \left(H^2(\Om)\cap H_0^1(\Om)\right) \times    \left(H^1_0(\Om)\cap L^{2(p-1)}(\Om)\right)$. Assume that $p$ further satisfies the inequalities
   $$p(N-4)\leq 2N<p(N-2).$$ Then the energy of the corresponding strong solution  of \eqref{e1} satisfies the decay estimate:
\begin{equation}\label{nene1}
E(t)\leq{K_1}{\left(1+t\right)^{-{\frac{1}{\mu_{p,N}}}}},\quad\forall t\geq0,\end{equation}where $\mu_{p,N}=\frac{(p-2)}{2}\max\left\{1,\frac{N-4}{4(p-1)}\right\}$, and $K_1=K_1(\Om,N, E(0), p,F(0))$ is a positive constant.  
\end{thm}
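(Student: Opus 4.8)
The strategy is to run the classical energy method of Nakao/Haraux but keep careful track of the nonlinear integral term, using the extra integrability $y^1 \in L^{2(p-1)}(\Om)$ (and $y^0 \in H^2$) to control a term that is uncontrollable without additional regularity when $p$ is supercritical. The starting point is the differential identity $E'(t) = -c\,|y_t(t)|_p^p$ from \eqref{e7}. To turn this into a decay rate one wants to bound $E(t)$ itself by (a power of) $\int_t^{t+1} |y_t(s)|_p^p\,ds = -\frac{1}{c}\bigl(E(t+1)-E(t)\bigr)$. The standard route is to multiply \eqref{e1} by $y$ and integrate over $\Om \times (t,t+1)$, which produces
\[
\int_t^{t+1}\!\!\int_\Om |\nabla y|^2\,dx\,ds = \Bigl[-\!\int_\Om y_t y\,dx\Bigr]_t^{t+1} + \int_t^{t+1}\!\!\int_\Om |y_t|^2\,dx\,ds - c\int_t^{t+1}\!\!\int_\Om |y_t|^{p-2}y_t\,y\,dx\,ds .
\]
The boundary-in-time term and the $\int |y_t|^2$ term are handled as usual (Poincaré, Cauchy–Schwarz, and the fact that $E$ is nonincreasing), and what remains is the damping cross term $\int\int |y_t|^{p-2}y_t\, y$. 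In the subcritical case one estimates $|y|$ in $L^{p}$ by $|\nabla y|_2$ via Sobolev, but this fails here; instead I would use Hölder with exponents $p$ and $p' = p/(p-1)$ to write
\[
\Bigl|\int_\Om |y_t|^{p-2}y_t\,y\,dx\Bigr| \le |y_t|_p^{p-1}\,|y|_p ,
\]
and then control $|y(t)|_p$ not by the energy but by an a priori bound coming from the $H^2$ regularity of the strong solution: since $F(t)\le F(0)$ by Proposition \ref{ss}, $y(t)$ is bounded in $H^2(\Om)$, hence in $L^\infty(\Om)$ if $N\le 3$ and in every $L^q$, $q<\infty$, in general; in all cases $|y(t)|_p \le C(\Om,N,F(0))$ uniformly in $t$. (This is exactly where the dimensional restriction $p(N-4)\le 2N$, i.e. $H^2\hookrightarrow L^{p}$ in the borderline sense, enters, and also where $y^1\in L^{2(p-1)}$ is needed so that $g(y^1)=c|y^1|^{p-2}y^1\in L^2$ and Proposition \ref{ss} applies.)

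With $|y(t)|_p$ bounded by a constant $M$, the cross term over $(t,t+1)$ is bounded via Hölder in time by $M\,\bigl(\int_t^{t+1}|y_t|_p^p\bigr)^{(p-1)/p} = M\,\bigl(\tfrac{1}{c}(E(t)-E(t+1))\bigr)^{(p-1)/p}$. Feeding everything back, one obtains an inequality of the shape
\[
E(t+1) \le E(t) \quad\text{and}\quad E(t)^{\alpha} \le C\bigl(E(t)-E(t+1)\bigr)
\]
for a suitable exponent $\alpha>1$; a classical lemma on sequences (Nakao's lemma, or the discrete Gronwall-type argument in \cite{h1}) then yields $E(t)\le K(1+t)^{-1/(\alpha-1)}$. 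Tracking the exponent bookkeeping should give exactly $\alpha - 1 = \mu_{p,N} = \frac{p-2}{2}\max\{1,\frac{N-4}{4(p-1)}\}$: the first branch is the usual $\frac{p-2}{2}$ one gets when $|y|_p$ is bounded (so that the $L^2$-in-time norm of $y_t$ is estimated by interpolation between $L^p$ and the energy control of $\nabla y$), while the second branch $\frac{(p-2)(N-4)}{8(p-1)}$ reflects the loss when $N$ is large enough that the Sobolev embedding $H^2\hookrightarrow L^p$ needs an interpolation inequality $|y|_p \le |y|_2^{\theta}|\Delta y|_2^{1-\theta}$ with $\theta$ depending on $N$ and $p$, which degrades the power slightly.

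The main obstacle, and the point requiring the most care, is the estimate of the $\int_t^{t+1}\int_\Om |y_t|^2\,dx\,ds$ term and of $|y(t)|_p$ in a way that produces a genuinely \emph{closed} differential inequality for $E$ — that is, without leaving a term that is merely bounded rather than controlled by $E(t)-E(t+1)$. For $|y_t|^2$ one must interpolate $L^2$ between $L^p$ (controlled by $-E'$) and something energy-bounded; the honest way is to use $|y_t|_2^2 \le |y_t|_p^{\theta}\,|y_t|_r^{1-\theta}$ for an appropriate $r\le 2^\ast$ coming from Sobolev on $\nabla y_t$ (bounded by $F(0)$), and this is precisely where the two competing exponents in $\mu_{p,N}$ are produced and where the hypothesis $2N < p(N-2)$ is used to guarantee the relevant embeddings are admissible. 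Once the interpolation exponents are pinned down the rest is the routine iteration lemma; I would state that lemma explicitly (or cite \cite{nak0,h1}) and then just verify the arithmetic of the exponents.
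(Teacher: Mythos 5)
Your overall architecture --- exploit Proposition \ref{ss} to get a uniform $H^2$ bound on $y(t)$, use it to control $|y(t)|_p$ in the damping cross term, and then close a Nakao-type discrete inequality $E(t)^{1+\mu}\le C\bigl(E(t)-E(t+1)\bigr)$ --- is sound, and the discrete route is a legitimate substitute for the paper's continuous perturbed-energy functional $E_\varepsilon=E+\varepsilon E^{\mu}\int_\Omega yy_t\,dx$. The hypotheses are also invoked for the right reasons ($y^1\in L^{2(p-1)}$ so that $g(y^1)\in L^2$ and Proposition \ref{ss} applies; $p(N-4)\le 2N$ for $H^2(\Omega)\hookrightarrow L^p(\Omega)$). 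But the concrete estimate you commit to, namely $|y(t)|_p\le M(\Omega,N,F(0))$ uniformly in $t$, is not sharp enough to produce the exponent $\mu_{p,N}$. With that constant bound the cross term contributes $M D^{p-1}$, where $D^p=c^{-1}\bigl(E(t)-E(t+1)\bigr)$, and the resulting inequality $E\le C(D^2+MD^{p-1})$ yields the exponent $\max\{\tfrac{p-2}{2},\tfrac{1}{p-1}\}$ rather than $\max\{\tfrac{p-2}{2},\tfrac{(p-2)(N-4)}{8(p-1)}\}$. These differ on a nonempty part of the admissible range, namely whenever $2<p<3$ and $p(N-4)<2N$ strictly (e.g. $N=7$, $p=2.9$: your bookkeeping gives decay $(1+t)^{-(p-1)}=(1+t)^{-1.9}$ against the claimed $(1+t)^{-2/(p-2)}=(1+t)^{-20/9}$). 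Indeed, under $p(N-4)\le 2N$ one always has $(p-2)(N-4)\le 8$, so the theorem's second branch is always $\le\tfrac{1}{p-1}$; your version is systematically weaker and amounts to a polynomial analogue of Theorem \ref{stab}, not Theorem \ref{stab0}.

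The missing idea is the Gagliardo--Nirenberg interpolation of Corollary \ref{interp}: $|y(t)|_p\le C|y(t)|_2^{1-\delta}\|y(t)\|_{H^2}^{\delta}$ with $\delta=\tfrac{N(p-2)}{4p}$, which, combined with $|y|_2\le CE^{1/2}$ and the uniform $H^2$ bound, gives $|y(t)|_p\le CE(t)^{\frac{2N-p(N-4)}{8p}}F(0)^{\frac{N(p-2)}{8p}}$ as in \eqref{nwld01}. Retaining this positive power of $E(t)$ in the cross term is precisely what relaxes the constraint on $\mu$ from $\mu\ge\tfrac{1}{p-1}$ to $\mu\ge\tfrac{(p-2)(N-4)}{8(p-1)}$ and produces the stated $\mu_{p,N}$. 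You do mention this interpolation in your last paragraph, but you misattribute its role: it is not a ``loss'' incurred for large $N$ but the gain over the crude constant bound, and the second branch of $\mu_{p,N}$ comes from this cross-term estimate, not from interpolating $|y_t|_2$ between $L^p$ and a Sobolev norm of $\nabla y_t$ (for that term $|y_t|_2\le C|y_t|_p$ by H\"older on the bounded domain suffices and yields the first branch $\tfrac{p-2}{2}$; the hypothesis $2N<p(N-2)$ is not needed for any embedding, it merely delimits the supercritical regime). Two smaller slips: $H^2(\Omega)$ embeds in every $L^q$, $q<\infty$, only for $N\le4$, not ``in general''; and once the interpolation is in place you still need the Young-inequality bookkeeping separating the cases $p(N-4)=2N$ and $p(N-4)<2N$, which is where the maximum defining $\mu_{p,N}$ is actually decided.
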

 
\begin{thm}\label{stab}{\bf (A logarithmic decay rate under a weak additional assumption on $y^0$.)} Let $p>2$.  Let $y^0\in H_0^1(\Om)\cap L^p(\Om)$ and $y^1\in L^2(\Om)$. Assuming $(N-2)p>2N$, the energy of the corresponding  weak solution of \eqre{e1} given by Theorem \re{wp} satisfies the decay estimate \begin{equation}\label{ene1}
E(t)\leq {K}{(\log(2+t))^{-{1\over \mu_p}}}\quad\forall t\geq0,\end{equation}where $\mu_p=\max\{\frac{p-2}{2}, \frac{1}{p-1}\}$, and $K = K(\Om,N, E(0), p,| y^0|_p)$ is a positive constant.  
\end{thm}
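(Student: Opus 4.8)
The plan is to establish a nonlinear differential inequality for the energy $E(t)$ that, after integration, yields the logarithmic rate. First I would work with a strong solution (the general case follows by the density argument built into the definition of weak solution and the uniqueness–stability inequality \eqref{t0}, which guarantees that all quantities pass to the limit) so that multiplications and integrations by parts are justified. The starting point is \eqref{e7}: $E'(t)=-c\,|y_t(t)|_p^p$. The aim is to bound $E(t)$ from above in terms of $|E'(t)|$ and the controlled quantity $|y(t)|_p$, which by Proposition \ref{preg} grows at most like a power of $t$.

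The core step is a multiplier estimate: multiply \eqref{e1} by $y$ and integrate over $\Om\times(S,T)$. This produces $\int_S^T\int_\Om |\nabla y|^2 = \int_S^T\int_\Om |y_t|^2 - \big[\int_\Om y_t y\big]_S^T - c\int_S^T\int_\Om |y_t|^{p-2}y_t\, y$. The term $\int_\Om y_t y$ is controlled by $E(t)$ and $|y(t)|_p$ (via Hölder, since $p>2>p'$ and $\Om$ is bounded). The delicate term is $c\int_S^T\int_\Om |y_t|^{p-2}y_t\, y$; by Hölder in $x$ with exponents $p$ and $p'=p/(p-1)$ it is bounded by $c\int_S^T |y_t(s)|_p^{p-1}\,|y(s)|_p\, ds$, and then by Hölder in $t$ (exponents $p/(p-1)$ and $p$) by $c\big(\int_S^T |y_t|_p^p\big)^{(p-1)/p}\big(\int_S^T |y(s)|_p^p\big)^{1/p}$. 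Combining with the term $\int_S^T |y_t|^2 \le C\int_S^T |y_t|_p^{2}$ handled similarly (using $|\Om|<\infty$), and recalling $\int_S^T |y_t|_p^p\,ds = (E(S)-E(T))/c$, one arrives at an inequality of the form
$$
\int_S^T E(s)\,ds \le C\Big(E(S) + \big(E(S)-E(T)\big)^{1-1/p}\,\Theta(T) + \big(E(S)-E(T)\big)^{2/p}\,\Theta(T)^{?}\Big),
$$
where $\Theta(T)=\big(\int_S^T|y(s)|_p^p\big)^{1/p}$ is, by Proposition \ref{preg}, at most $C(1+T)^{(p-1)/p}\cdot(1+T)^{1/p}\sim C(1+T)$ up to constants depending on $|y^0|_p$ and $E(0)$ — more precisely $\Theta(T)\le C(1+T)$. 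Using monotonicity of $E$ to replace $\int_S^T E(s)\,ds$ by $(T-S)E(T)$ from below, and absorbing, one obtains a recursive estimate comparing $E(T)$, $E(S)$ and $T$.

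The last step is to convert this into the stated rate. Taking $S=0$ and writing the inequality in the schematic form $T\,E(T)\le C E(0) + C(1+T)\,E(0)^{1-1/p} + C(1+T)\,E(T)^{?}$ is too lossy; instead I would iterate on dyadic intervals, or better, derive a genuine differential inequality by letting $T-S\to 0$: dividing by $T-S$ and passing to the limit gives, at almost every $t$,
$$
E(t)\le C\big(-E'(t)\big)^{1-1/p}\,(1+t) + C\big(-E'(t)\big)^{2/p}(1+t) + C\big(-E'(t)\big)^{1/p},
$$
hence $E(t)^{\mu}\le C(1+t)\,\big(-E'(t)\big)$ for a suitable exponent, namely with $1/\mu$ matching $\mu_p=\max\{\tfrac{p-2}{2},\tfrac{1}{p-1}\}$ after identifying the dominant term among the three (the first term, with exponent $1-1/p=(p-1)/p$, inverts to power $p/(p-1)=1+1/(p-1)$, contributing $\tfrac{1}{p-1}$; the third term's $(-E')^{1/p}$ inverts to $(-E')\ge C E^p/(1+t)^p$-type behaviour — here one must track the bookkeeping carefully). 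Then separating variables in $E'\le -c\,E^{1+\mu_p}/(1+t)$ and integrating from $0$ to $t$ produces $E(t)^{-\mu_p}\ge E(0)^{-\mu_p}+c\,\mu_p\log(1+t)$, i.e. $E(t)\le K(\log(2+t))^{-1/\mu_p}$, which is \eqref{ene1}.

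The main obstacle is the bookkeeping in the multiplier estimate: one has three error terms with different powers of $(E(S)-E(T))$ and of $(1+T)$, and extracting the single clean differential inequality $E^{1+\mu_p}\le C(1+t)(-E')$ with the sharp exponent $\mu_p=\max\{\tfrac{p-2}{2},\tfrac{1}{p-1}\}$ requires carefully identifying which term dominates in which regime of $p$ and using Young's inequality to absorb the others without losing the power of $(1+t)$. A secondary technical point is justifying the passage $T-S\to0$ (or equivalently the dyadic iteration) and the limit from strong to weak solutions, but these are standard given Propositions \ref{wp} and \ref{preg}.
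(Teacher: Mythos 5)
Your high-level strategy---replace the Sobolev embedding by the growth bound of Proposition \ref{preg} on $|y(t)|_p$, feed this into a multiplier estimate, and integrate a nonlinear differential inequality to get a logarithm---is the right idea and matches the paper's. However, two steps of your plan, as written, fail, and they are precisely the crux of the argument rather than bookkeeping.

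First, the passage from the integrated multiplier inequality on $(S,T)$ to a pointwise differential inequality by ``dividing by $T-S$ and letting $T-S\to0$'' does not go through. After division, the boundary term $\bigl[\int_\Om y_t y\,dx\bigr]_S^T/(T-S)$ does not disappear: it converges to $\frac{d}{dt}\int_\Om y_t y\,dx$, a signless quantity that is not controlled pointwise by $E$ and $E'$. Consequently the claimed pointwise bound $E(t)\le C(1+t)(-E'(t))^{1-1/p}+\cdots$ is not obtained, and one should not expect such a bound: at times where $y_t(t)$ is small in $L^p$ while $\nabla y(t)$ is not, the right-hand side is much smaller than $E(t)$. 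The standard ways around this are (a) to keep the inequality in integral form over $(S,\infty)$ and invoke a weighted integral inequality of Haraux--Komornik--Martinez type, or (b) to absorb $\frac{d}{dt}\int_\Om y_t y\,dx$ into a perturbed energy $E_\v(t)=E(t)+\v\,\var'(t)E^\mu\int_\Om yy_t\,dx$. The paper takes route (b), with $\var(t)=\log(2+t)-\log 2$, and justifies the computations for weak solutions by citing \cite{h4} rather than by a density argument.

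Second, even granting a pointwise inequality $E\le C(1+t)(-E')^{\theta}$ with $\theta=\tfrac{p-1}{p}$ or $\tfrac{2}{p}$, your ``separation of variables'' gives $-E'\ge cE^{1/\theta}(1+t)^{-1/\theta}$ with $1/\theta>1$; since $\int^\infty(1+t)^{-1/\theta}\,dt<\infty$, integrating yields only boundedness of a negative power of $E$, i.e.\ no decay at all. To land on the usable inequality $E'\le -cE^{1+\mu_p}/(1+t)$ one must not invert, but instead keep the weight $\var'(t)=1/(2+t)$ attached to the cross term from the start and split $\v\var'E^\mu|E'|^{(p-1)/p}|y(t)|_p$ by Young's inequality into a small multiple of $\var'E^{\mu+1}$ plus a bounded multiple of $|E'|$; this works precisely because $|y(t)|_p\le C(1+t)^{(p-1)/p}$ raised to the conjugate exponent $p/(p-1)$ produces a factor $(1+t)$, and $(1+t)\var'(t)$ is bounded exactly when $\var$ is logarithmic. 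It is in this Young step (together with the analogous treatment of $\int_\Om|y_t|^2\,dx$) that the constraints $\mu\ge\frac{p-2}{2}$ and $\mu(p-1)\ge1$ appear, yielding $\mu_p=\max\{\frac{p-2}{2},\frac{1}{p-1}\}$ and then the rate \eqref{ene1} via Lemma \ref{ndi}.
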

\begin{rem}\label{rem1} The logarithmic decay estimate of the energy in the super-critical case is new, and will be established by relying on a new differential inequality. It  is valid for all weak solutions of System \eqre{e1} under the very light  additional assumption that  the initial displacement lies in $L^p(\Om)$. \end{rem} 
\begin{rem}\label{rem2} As Theorem \re{stab0} shows, strong solutions of \eqref{e1} will still satisfy the decay rate in \eqre{ene0} even if $(N-2)p>2N$, provided that $(N-4)p\leq 2N $  and $\mu_{p,N}=\frac{(p-2)}{2}\max\left\{1,\frac{N-4}{4(p-1)}\right\} = \frac{(p-2)}{2}$. In particular, when $N\ge 5$ and $ p =\frac{ 2N} {N-4} $, we have the energy decay estimates $$\forall t\geq0, \quad E(t)\leq\begin{cases}&K_0(1+t)^{-\frac{N+4}{N-4}},\text{ if }N\ge 12,\\
&K_0(1+t)^{-\frac{N-4}{4}},\text{ if }N\le12.\end{cases} $$  The maximal decay rate of the energy for the maximal value $ p =\frac{ 2N} {N-4} $ is obtained for $N= 12$ and corresponds to $$ E(t) \le K t^{-2}$$ \\
More generally, the decay rate of the energy for $ N\ge 5$  and $ p \le \frac{ 2N} {N-4} $ is given by \eqre{ene0} provided $\mu_{p,N}= \frac{(p-2)}{2}$ which is equivalent to $ p\ge \frac{N}{4}$.   And this is valid for all $p\ge \frac{ 2N} {N-2} $ if  $ N \in \{3, 4\} $  and all $p\in [\frac{ 2N} {N-2},  \frac{ 2N} {N-4}]$ whenever $5\le N\le 10$.   \end{rem}  

\begin{section}{Some technical lemmas}
\begin{lem}\label{ndi} (New differential inequality). Let $\var:[0,\infty)\longrightarrow[0,\infty)$ be a ${\cal C}^1$ strictly increasing function with \begin{equation}\label{l1}\var(0)=0\text{ and }\lim_{t\to\infty}\var(t)=\infty.\end{equation}  Let
$E:[0,\infty[\longrightarrow [0,\infty[$ be a nonincreasing
locally absolutely continuous function such that there exist constants
 $\beta\geq0$ and $A>0$ with   \begin{equation}\label{l2}E'(t)\leq -A\varphi'(t)E(t)^{1+\beta},\text{ a. e. }t\geq0.\end{equation} Then we
have for every $t\geq0$: \begin{equation}\label{l3}E(t)\leq \left\{\begin{array}{ll}&E(0)e^{-A\var(t)},\text{ if }\beta=0,\cr& E(0)\left({1+A\beta E(0)^\beta\var(t)}\right)^{-\frac{1}{
\beta}},\text{ if }\beta>0.\cr \end{array}\right. \end{equation} \end{lem}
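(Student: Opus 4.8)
The plan is to turn the differential inequality \eqref{l2} into an explicit integration by dividing through by a suitable power of $E$; the only precaution needed is to remain on the region where $E$ is positive. I would first dispose of the trivial reductions: if $E(0)=0$, then since $E$ is nonincreasing and nonnegative one gets $E\equiv0$ and both right-hand sides of \eqref{l3} vanish, so there is nothing to prove; hence assume $E(0)>0$. Moreover, since $E$ is continuous (being locally absolutely continuous) and nonincreasing, the set $\{t\geq0:E(t)=0\}$ is either empty or a closed half-line $[T^\ast,\infty)$, and on such a half-line \eqref{l3} holds automatically because its right-hand sides are strictly positive. It therefore suffices to establish \eqref{l3} at each $t\geq0$ with $E(t)>0$; for such a $t$ one has $E(s)\geq E(t)>0$ for every $s\in[0,t]$, so on $[0,t]$ the function $E$ ranges in a compact subset of $(0,\infty)$.

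Next, fixing such a $t$, I would first treat the case $\beta>0$. Since the map $r\mapsto r^{-\beta}$ is $C^1$, hence Lipschitz on compact subsets of $(0,\infty)$, the composition $H(s):=E(s)^{-\beta}$ is absolutely continuous on $[0,t]$, with $H'(s)=-\beta E(s)^{-\beta-1}E'(s)$ for a.e.\ $s$. Substituting \eqref{l2} gives $H'(s)\geq A\beta\,\varphi'(s)$ for a.e.\ $s\in[0,t]$, and integrating over $[0,t]$ while using $\varphi(0)=0$ yields $E(t)^{-\beta}\geq E(0)^{-\beta}+A\beta\,\varphi(t)$; factoring $E(0)^{-\beta}$ out of the right-hand side and raising to the power $-1/\beta$ gives precisely the second line of \eqref{l3}. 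For $\beta=0$ I would argue identically with $H(s):=-\log E(s)$, which is absolutely continuous on $[0,t]$ with $H'(s)=-E'(s)/E(s)\geq A\varphi'(s)$ a.e.; integration then gives $-\log E(t)\geq-\log E(0)+A\varphi(t)$, i.e.\ the first line of \eqref{l3}.

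I do not expect a genuine obstacle here. The one point requiring care is the differentiation of $E^{-\beta}$ (resp.\ $\log E$) when $E$ is only locally absolutely continuous; this is exactly why one should first localize to the set $\{E>0\}$ and to a compact time interval on which $E$ is bounded away from $0$, so that the chain rule for an absolutely continuous function composed with a $C^1$ map is available. An essentially equivalent alternative would be to use the integrating factor $e^{A\varphi(t)}$ (when $\beta=0$) or the substitution $s=\varphi(t)$; I would avoid the substitution, however, since $\varphi^{-1}$ need not be absolutely continuous when $\varphi'$ has zeros, whereas the division argument never inverts $\varphi$.
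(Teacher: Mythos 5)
The paper does not actually supply a proof of this lemma --- it is explicitly ``left as an easy exercise'' --- so there is nothing to compare against; your argument is the standard and intended one. Your proof is correct and complete: the reduction to the set where $E>0$, the absolute continuity of $E^{-\beta}$ (resp.\ $-\log E$) on a compact interval where $E$ is bounded away from zero, and the integration of $H'\geq A\beta\varphi'$ using $\varphi(0)=0$ all go through exactly as you describe, and the algebraic rearrangement recovers both lines of \eqref{l3}.
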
 
 
 The proof of this lemma is elementary and is left as an easy exercise to the interested reader. The weight function $\var$ is borrowed from Martinez works, e.g. \cite{mar1,mar3}, discussing integral inequalities; the Martinez integral inequalities generalize earlier results of Haraux \cite{hos} and Komornik \cite[Chaps. 8 and 9]{kb} established for $\var(t)=t$. Several generalizations of those integral inequalities exist in the literature e.g. \cite {al,gues}. Note in particular that \eqref{l2} implies the integral inequality
$$\int_S^\infty\var'(t)E(t)^{1+\beta}\,dt\leq (1/A)E(S),\quad\forall S\geq0.$$
 \\  
\begin{lem}\label{gni} (Gagliardo-Nirenberg interpolation inequality) \cite{nir}
 Let $1\leq q\leq s\leq\infty$,
$1\leq r\leq s$, $0\leq k<m<\infty$, where $k$ and $m$ are nonnegative
integers,
and let $\d\in[0,1]$. Let $v\in W^{m,q}(\Om)$.
Further assume
that those parameters satisfy
\begin{equation}k-{N\over s}\leq \d(m-\frac{N}{ q})-\frac{N}{r}(1-\d).\end{equation}
Then
$v\in W^{k,s}(\Om)$, and there exists a positive constant $C=C(k,m,s,q,r,\Om)$ such
that
\begin{equation}\label{gni1}||v||_{W^{k,s}(\Om)}\leq
C||v||_{W^{m,q}(\Om)}^\d|v|_r^{1-\d}.\end{equation}\end{lem}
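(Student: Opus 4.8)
The plan is to treat this as the classical Gagliardo--Nirenberg inequality (see \cite{nir}) and to reconstruct a proof in three stages: reduce to the whole space, establish a scaling‑critical first‑order estimate there, and then bootstrap in the number of derivatives. First I would use that $\Om$ has a $C^2$ (hence Lipschitz) boundary to fix a bounded extension operator $P\colon W^{m,q}(\Om)\to W^{m,q}(\R^N)$ of Stein--Calder\'on type, which has the extra feature of being simultaneously bounded on every $L^\rho$ and $W^{j,\rho}$ with $0\le j\le m$, and of producing functions supported in a fixed bounded neighbourhood of $\overline\Om$. Applying the $\R^N$ inequality to $Pv$ and then estimating $\|v\|_{W^{k,s}(\Om)}\le\|Pv\|_{W^{k,s}(\R^N)}$, $\|Pv\|_{W^{m,q}(\R^N)}\le C\|v\|_{W^{m,q}(\Om)}$ and $|Pv|_r\le C|v|_r$ reduces everything to $\Om=\R^N$ with $v\in C_c^\infty(\R^N)$, the general $v$ following by density. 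I would also use that on a bounded domain $L^{s^*}(\Om)\hookrightarrow L^s(\Om)$ for $s\le s^*$ in order to reduce the \emph{inequality} in the dimensional hypothesis to the \emph{equality} case: if the exponent balance is strict, enlarge $s$ to the $s^*$ for which it becomes an equality, prove the estimate for $s^*$, and return to $s$ at the cost of a factor $|\Om|^{1/s-1/s^*}$.

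The core step is the scaling‑critical first‑order inequality on $\R^N$, namely $|v|_s\le C\,|\nabla v|_q^{\delta}\,|v|_r^{1-\delta}$ whenever $v\in C_c^\infty(\R^N)$, $\delta\in(0,1]$, $1\le q<N$ and $\tfrac1s=\delta(\tfrac1q-\tfrac1N)+(1-\delta)\tfrac1r$. I would obtain this by the Gagliardo slicing argument: first the $L^1$ Sobolev endpoint $|u|_{N/(N-1)}\le C|\nabla u|_1$, proved by writing $|u(x)|$ as a line integral in each of the $N$ coordinate directions and iterating the generalized (Loomis--Whitney) H\"older inequality; then, applying this to $u=|v|^{\gamma}$ with $\gamma>1$ and estimating $\int|v|^{\gamma-1}|\nabla v|\,dx$ by H\"older, one gets the inequality with $\delta=1/\gamma$ for a $\gamma$‑dependent pair $(s,r)$, and the general admissible triple $(s,r,\delta)$ is reached by combining this one‑parameter family with H\"older interpolation among the $L^\rho$ norms, all choices being dictated by the scaling identity. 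From here the general orders $0\le k<m$ are obtained by induction, interpolating an intermediate derivative via Nirenberg's lemma $|\nabla^{j}v|_\rho\le C|\nabla^{j+1}v|_\rho^{1/2}|\nabla^{j-1}v|_\rho^{1/2}$ and applying the first‑order estimate componentwise; the exponents compose multiplicatively and the dimensional balance is preserved at every step, so that the final relation is exactly the hypothesis of the lemma.

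The remaining point is the finitely many limiting configurations — $q=1$, $q\ge N$, $s=\infty$, and the genuinely exceptional case $\delta=1$ with $m-k-\tfrac Nq$ a nonnegative integer — which are not covered by the generic argument and must be handled by ad hoc variants (or, for the last one, excluded, as in \cite{nir}); after the $s\mapsto s^*$ reduction most of them become harmless on a bounded domain. I expect the main obstacles to be precisely two: making the Gagliardo slicing estimate rigorous (it is elementary but not a formal manipulation), and the exponent bookkeeping through the extension, the reduction to the equality case, and the induction on derivatives, so that the single scaling inequality assumed in the statement is exactly what is needed at each stage. Everything else is routine.
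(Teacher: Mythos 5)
The paper offers no proof of this lemma: it is quoted verbatim as a classical result with a citation to Nirenberg's 1959 paper, so there is no internal argument to compare yours against. Your outline is a faithful reconstruction of the standard proof from that reference --- extension to $\R^N$, reduction of the dimensional inequality to the equality case via $L^{s^*}(\Om)\hookrightarrow L^{s}(\Om)$ on a bounded domain, the Gagliardo slicing/Loomis--Whitney argument for the $L^1$ endpoint, the $|v|^{\gamma}$ trick, and induction on the order of derivatives --- and the logic of each reduction is sound as stated. One caveat worth recording: as literally written the lemma is false for arbitrary $\d\in[0,1]$ (take $k=1$, $m=2$, $\d=0$), since the dimensional balance alone does not suffice; one needs in addition $\d\ge k/m$ (and the exclusion of the exceptional integer case you mention). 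Your induction via the intermediate-derivative interpolation $|\nabla^{j}v|_\rho\leq C|\nabla^{j+1}v|_\rho^{1/2}|\nabla^{j-1}v|_\rho^{1/2}$ would in fact force this restriction to surface, so your proof would only establish the corrected statement --- which is all the paper uses, since Corollary~\ref{interp} applies the lemma with $k=0$, where the restriction is vacuous.
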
 Applying the lemma with $ k= 0$, $m = q = r = 2$ and $s= p$ we find 
 
 \begin{cory}\label{interp} Let $p>2 $ be such that $(N-4)p \le 2N$.  Then $ H^2(\Om) \subset L^p(\Om)$  and 
 \begin{equation}\label{interp1} \forall v \in H^2(\Om), \quad  |v|_{p}\leq
C||v||_{H^2(\Om)}^\d|v|_2^{1-\d}\end{equation}
with $$ \d = \frac{ N(p-2)} {4p} . $$ 
 
 \end{cory}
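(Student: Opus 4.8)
The plan is to obtain Corollary~\ref{interp} as a direct specialization of the Gagliardo--Nirenberg inequality of Lemma~\ref{gni}, applied with $k=0$, $m=q=r=2$ and $s=p$. With this choice the only work is to verify the three admissibility requirements of Lemma~\ref{gni} and to identify the exponent $\d$, after which the estimate \eqref{interp1} is just \eqref{gni1} rewritten.

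First I would check the numerical ranges: $1\le q\le s\le\infty$ and $1\le r\le s$ both read $1\le 2\le p$, which holds because $p>2$, while $0\le k<m<\infty$ holds with the nonnegative integers $k=0$, $m=2$. Next I would compute $\d$ from the scaling relation $k-\frac{N}{s}\le\d\bigl(m-\frac{N}{q}\bigr)-\frac{N}{r}(1-\d)$ of Lemma~\ref{gni}: with the chosen parameters its right-hand side equals $\d(2-\frac{N}{2})-\frac{N}{2}(1-\d)=2\d-\frac{N}{2}$, so the relation becomes $-\frac{N}{p}\le 2\d-\frac{N}{2}$, i.e. $\d\ge\frac{N}{4}-\frac{N}{2p}=\frac{N(p-2)}{4p}$; the borderline admissible choice is therefore $\d=\frac{N(p-2)}{4p}$, for which the relation is an equality. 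Finally I would verify $\d\in[0,1]$: since $p>2$ one has $\d>0$, and $\d\le1$ is equivalent to $N(p-2)\le 4p$, that is, to $(N-4)p\le 2N$, which is exactly the hypothesis of the corollary. Once these checks are in place, Lemma~\ref{gni} applies and gives, for every $v\in W^{2,2}(\Om)=H^2(\Om)$, that $v\in W^{0,p}(\Om)=L^p(\Om)$ and that $|v|_p=\|v\|_{W^{0,p}(\Om)}\le C\|v\|_{W^{2,2}(\Om)}^{\d}|v|_2^{1-\d}=C\|v\|_{H^2(\Om)}^{\d}|v|_2^{1-\d}$, which is \eqref{interp1}; since the right-hand side is finite for every $v\in H^2(\Om)$, the inclusion $H^2(\Om)\subset L^p(\Om)$ follows.

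There is essentially no obstacle in this argument; the one point worth stressing is that the requirement $\d\le1$ in Lemma~\ref{gni} translates precisely into the corollary's hypothesis $(N-4)p\le 2N$, so that the interpolation exponent $\d=\frac{N(p-2)}{4p}$ is legitimate exactly on the stated range, and in the borderline case $(N-4)p=2N$ one gets $\d=1$ and \eqref{interp1} degenerates to the Sobolev embedding $H^2(\Om)\subset L^p(\Om)$.
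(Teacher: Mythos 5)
Your proposal is correct and is exactly the paper's argument: the corollary is obtained by specializing Lemma~\ref{gni} to $k=0$, $m=q=r=2$, $s=p$, and your verification that the scaling relation forces $\d\ge\frac{N(p-2)}{4p}$ while $\d\le1$ is precisely the hypothesis $(N-4)p\le 2N$ fills in the routine checks the paper leaves implicit.
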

  \label{tl}

\end{section}
\section{ Proof of Theorem \re{stab0}}
We are going to use the perturbed energy method. For this purpose, let $\v>0$ and $\mu\geq0$ be  constants to be determined later.   Introduce the new energy
$$E_\v(t)=E(t)+\v E^{\mu}\int_\Om yy_t\,dx,\quad \forall t\geq0.$$
Using the Cauchy-Schwarz inequality, one readily checks
\begin{align}\label{eeq}
\left(1-\frac{\v E(0)^\mu}{\lambda}\right)E(t)\leq E_\v(t)\leq \left(1+\frac{\v E(0)^\mu}{\lambda}\right)E(t),\quad\forall t\geq0,\end{align}where $\lambda^2$ is the first eigenvalue of minus Laplacian with Dirichlet boundary conditions.  \\ Thus, provided $\v$ is small enough, the two energies are equivalent. Our task now is to show that $E_\v$ satisfies a classical differential inequality from which we will be able to derive the claimed energy decay estimate.  We recall that we are dealing with the super-critical case: $(N-2)p>2N$. \\ From now on, we divide the proofs into two steps. In the first step, we are going to derive a first estimate for $E_\v'(t)$, without completely estimating the term involving the nonlinear damping. In the second step, we shall complete the estimation of the latter term and complete the proof of Theorem \re{stab0}. 
\vskip.2cm\noindent
{\sl Step 1.} Differentiating $E_\v$, then using the first equation in \eqre{e1}, we find
\begin{align}\label{wld1}
E_\v'(t)&=E'(t) +\v E^{\mu}\int_\Om |y_t|^2\,dx-\v E^{\mu}\int_\Om |\nabla y|^2\,dx-c\v E^{\mu}\int_\Om |y_t|^{p-2}y_ty\,dx\notag\\
&\hskip.2in+\mu\v E'E^{\mu-1}\int_\Om yy_t\,dx \notag\\&
=E'(t) +2\v E^{\mu}\int_\Om |y_t|^2\,dx-\varepsilon E^{\mu}\int_\Om \{|y_t|^2+ |\nabla y|^2\}\,dx-c\v\ E^{\mu}\int_\Om |y_t|^{p-2}y_ty\,dx\\
&\hskip.2in+\mu\v E'E^{\mu-1}\int_\Om yy_t\,dx\notag\\&
=E'(t) -2\v E^{\mu+1}+2 \varepsilon E^{\mu}\int_\Om |y_t|^2\,dx-c\v E^{\mu}\int_\Om |y_t|^{p-2}y_ty\,dx\notag\\
&\hskip.2in+\mu\v E'E^{\mu-1}\int_\Om yy_t\,dx.\text{ a. e. }t>0. \notag\end{align}Now, it remains to estimate each of the integral terms. Henceforth, $C$ denotes a generic positive constant that may depend on $\Om$, $p$ or $N$, but never on the initial data.\\ We start with the last integral. Thanks to the Cauchy-Schwarz and Poincar\'e inequalities, we derive
\begin{align}\label{wld2}
\left|\mu\v E'E^{\mu-1}\int_\Om yy_t\,dx\right|\leq C\v E(0)^\mu|E'(t)|.\end{align}
 The application of H$\ddot{\text{o}}$lder's inequality yields
\begin{align}\label{wld4}
2\v E^{\mu}\int_\Om |y_t|^2\,dx\leq C\v E^{\mu}|y_t|_p^2&\leq C\v E^{\mu}|E'(t)|^{\frac{2}{p}}\notag\\&=C\v E^{\frac{(\mu+1)(p-2)}{p}+\frac{2\mu-(p-2)}{p}}|E'(t)|^{\frac{2}{p}}.\end{align}
Using Young's inequality, and assuming  $ \displaystyle \mu \geq \frac{(p-2)}{2} $, we derive from \eqre{wld4}:
\begin{align}\label{wld40}
2\v E^{\mu}\int_\Om |y_t|^2\,dx\leq \frac{\v(p-2)}{p}E^{\mu+1}+C\v E(0)^{\frac {2\mu-(p-2)}{2}}|E'(t)|.\end{align}
For the last integral, using H$\ddot{\text{o}}$lder's inequality, we find 
\begin{align}\label{wld5}
\left|c\v E^{\mu}\int_\Om |y_t|^{p-2}y_ty\,dx\right|\leq c\v E^{\mu}|y_t(t)|_p^{p-1}|y(t)|_p\leq c\v E^{\mu}|E'(t)|^{\frac{p-1}{p}}|y(t)|_p.\end{align} 
Combining \eqref{wld1}-\eqref{wld5}, we derive 
\begin{align}\label{nwld6}
E_\v'(t)&\leq \left(1-C\v \left(E(0)^\mu+E(0)^{\frac {2\mu-(p-2)}{2}}\right)\right)E'(t)-\frac{\v(p+2)}{p} E^{\mu+1}\notag\\&\hskip.3cm+
 c\v E^{\mu}|E'(t)|^{\frac{p-1}{p}}|y(t)|_p.\end{align} In earlier works, the authors estimated the term $|y(t)|_p$  as:
 \begin{align}\label{wld50}
 |y|_p\leq C|\nabla y(t)|_2.\end{align} Doing that immediately forced them to work in the subcritical or critical framework.\\ In this section, we use the fact that we are dealing with strong solutions. This enables us to get into the supercritical range. To estimate
   the term $|y(t)|_p$, we are going to rely on the Gagliardo-Nirenberg interpolation inequality.
 \vskip.2cm\noindent  
{\sl Step 2. Estimating $|y(t)|_p$ and completing the proof of Theorem \re{stab0}.} Thanks to Corollary \re{interp}, we find
\begin{align}\label{nwld01}|y(t)|_p\leq C|y(t)|_2^{2N-p(N-4)\over4p}||y(t)||_{H^2(\Omega)}^{N(p-2)\over4p}\leq C E(t)^{2N-p(N-4)\over8p}F(0)^{N(p-2)\over8p},\text{ a.e. }t>0.\end{align}Notice that we can use the Gagliardo-Nirenberg interpolation inequality or Sobolev embedding because we are dealing with strong solutions, and so, we are able to exploit the fact that $y(t)$ lies in $H^2(\Om).$ Therefore, we now have
\begin{align}\label{nwld66}
E_\v'(t)&\leq \left(1-C\v \left(E(0)^\mu+E(0)^{\frac {2\mu-(p-2)}{2}}\right)\right)E'(t)-\frac{\v(p+2)}{p} E^{\mu+1}\notag\\&\hskip.3cm+
 C\v E^{\mu}|E'(t)|^{\frac{p-1}{p}}E(t)^{2N-p(N-4)\over8p}F(0)^{N(p-2)\over8p}.\end{align}
Now, we proceed by cases.
\vskip.2cm\noindent
{\bf Case 1: ${\mathbf {p(N-4)= 2N. }}$} This makes sense only if $N>4.$ In this case we have automatically $2N <p(N-2) $,   then \eqref{nwld01} and Young's inequality readily yield,
 under the condition  $\mu(p-1)\geq1$:
\begin{align}\label{nwld2} C\v E^{\mu}|E'(t)|^{p-1\over p}F(0)^{N(p-2)\over8p}&=C\v E^{\mu+1\over p}E^{\mu(p-1)-1\over p} |E'(t)|^{p-1\over p}F(0)^{N(p-2)\over8p}\notag\\&
\leq\frac{\v}{p} E^{\mu+1}+C\v E(0)^{\mu(p-1)-1\over p-1}F(0)^{N(p-2)\over8(p-1)}|E'(t)|.\end{align}
Plugging  this in \eqref{nwld66},  choosing $\v$ small enough and $\mu=\mu_p=\max\{(p-2)/2,1/(p-1)\}$ ,  one finds
\begin{align}\label{nwld3}
E_\v'(t)&\leq -\frac{\v(p+1)}{p} E^{\mu+1}(t).\end{align}
\vskip.2cm\noindent
{\bf Case 2: ${\mathbf {p(N-4)<2N<p(N-2).}}$} Thanks to Young's inequality we have, under the condition $8\mu(p-1)\geq(p-2)(N-4)$:
\begin{align}\label{nwld4} &C\v E^{\mu}|E'(t)|^{p-1\over p}E(t)^{2N-p(N-4)\over8p}F(0)^{N(p-2)\over8p}\notag\\&=C\v E^{\mu+1\over p}E^{8\mu(p-1)-(p-2)(N-4)\over 8p} |E'(t)|^{p-1\over p}F(0)^{N(p-2)\over8p}\\&
\leq\frac{\v}{p} E^{\mu+1}+C\v E(0)^{8\mu(p-1)-(p-2)(N-4)\over 8(p-1)}F(0)^{N(p-2)\over8(p-1)}|E'(t)|.\notag\end{align}
Gathering \eqref{nwld4} and  \eqref{nwld66},  choosing $\v$ small enough and $\mu=\mu_{p,N}=\frac{p-2}{2}\max\left\{1,\frac{N-4}{4(p-1)}\right\}$,  one finds
\begin{align}\label{nwld30}
E_\v'(t)&\leq -\frac{\v(p+1)}{p} E^{\mu+1}(t).\end{align}

Notice that in both cases, we obtain the same differential inequality. Now, thanks to \eqre{eeq}, we have
\begin{align}\label{nwld7}
 -\frac{\v(p+1)}{p} E^{\mu+1}(t)&\leq -\frac{\v(p+1)\lambda^{\mu+1}}{2p\left(\lambda+\v E(0)^\mu\right)^{\mu+1}} E_\v^{\mu+1}(t).\end{align}
The combination of \eqre{nwld30}-\eqre{nwld7} yields for some positive constant $\beta=\beta(\Om,p,N,E(0),F(0)):$ 
$$E_{\v}'(t)\leq -\beta E_{\v}(t)^{\mu+1},\text{ a.e. }t\geq0,$$which is a classical differential inequality.\\
 It then follows from that differential inequality and \eqref{eeq}, the claimed decay estimate
$$E(t)\leq K_0(1+t)^{-\frac{1}{\mu_{p,N}}},\quad\forall t\geq0.$$ Notice that when $p(N-4)=2N$, one has $\mu_{p,N}=\mu_p$. 
This completes the proof of Theorem~\re{stab0}.\qed
\section{ Proof of Theorem \re{stab}} 

Here, we are going to use the perturbed energy method as well. Although we are dealing now with weak solutions, the  formal computations are in fact fully justified since in \cite{h4} it is shown that the energy is absolutely continuous as soon as the damping term is odd, and the formal equality \eqref{e7} is actually satisfied a.e. in $t$.\medskip

Let $\v>0$ and $\mu\geq0$ be  constants to be determined later. Let $\var$ be the function in Lemma \re{ndi}, and further assume that $\var $ is of class ${\cal C}^2$ with $\var'(t)>0$ for every $t$ greater than or equal to zero, and $\var$ is concave.  Introduce the new energy
$$E_\v(t)=E(t)+\v\var'E^{\mu}\int_\Om yy_t\,dx,\quad \forall t\geq0.$$
Using the Cauchy-Schwarz inequality, one readily checks
\begin{align}\label{eeq1}
(1-\frac{\v\var'(0)E(0)^\mu}{\lambda})E(t)\leq E_\v(t)\leq (1+\frac{\v\var'(0)E(0)^\mu}{\lambda})E(t),\quad\forall t\geq0,\end{align}where $\lambda^2$ is, as in the proof of Theorem \re{stab0}, the first eigenvalue of minus Laplacian with Dirichlet boundary conditions.\\ Thus, provided $\v$ is small enough, the two energies are equivalent. Our task now is to show that $E_\v$ satisfies a differential inequality similar to the one in Lemma \re{ndi} with an appropriate function $\var$, and then apply the equivalence and Lemma \re{ndi} to derive the claimed decay estimate. \\ From now on, we divide the proof into two steps as we did for the proof of Theorem \re{stab0}. In the first step, we are going to derive a first estimate for $E_\v'(t)$, without completely estimating the term involving the nonlinear damping. In the second step, we shall complete the estimation of the latter term and complete the proof of Theorem \re{stab}.
\vskip.2cm\noindent
{\sl Step 1.} Differentiating $E_\v$, then using the first equation in \eqre{e1}, we find
\begin{align}\label{wld01}
E_\v'(t)&=E'(t) +\v\var'E^{\mu}\int_\Om |y_t|^2\,dx-\v\var'E^{\mu}\int_\Om |\nabla y|^2\,dx-c\v\var'E^{\mu}\int_\Om |y_t|^{p-2}y_ty\,dx\notag\\
&\hskip.2in+\v\var''E^{\mu}\int_\Om y_ty\,dx+\mu\v\var'E'E^{\mu-1}\int_\Om yy_t\,dx \notag\\&
=E'(t) +2\v\var'E^{\mu}\int_\Om |y_t|^2\,dx-\v\var'E^{\mu}\int_\Om \{|y_t|^2+ |\nabla y|^2\}\,dx-c\v\var'E^{\mu}\int_\Om |y_t|^{p-2}y_ty\,dx\\
&\hskip.2in+\v\var''E^{\mu}\int_\Om y_ty\,dx+\mu\v\var'E'E^{\mu-1}\int_\Om yy_t\,dx\notag\\&
=E'(t) -2\v\var'E^{\mu+1}+2\v\var'E^{\mu}\int_\Om |y_t|^2\,dx-c\v\var'E^{\mu}\int_\Om |y_t|^{p-2}y_ty\,dx\notag\\
&\hskip.2in+\v\var''E^{\mu}\int_\Om y_ty\,dx+\mu\v\var'E'E^{\mu-1}\int_\Om yy_t\,dx.\text{ a. e. }t>0. \notag\end{align}Now, it remains to estimate each of the integral terms. Henceforth, $C$ denotes a generic positive constant that may depend on $\Om$, $p$ or $N$, but never on the initial data.\\ We start with the last integral. Thanks to the Cauchy-Schwarz and Poincar\'e inequalities, we derive
\begin{align}\label{wld02}
\left|\mu\v\var'E'E^{\mu-1}\int_\Om yy_t\,dx\right|\leq C\v E(0)^\mu|E'(t)|.\end{align}
\begin{align}\label{wld03}
\left|\v\var''E^{\mu}\int_\Om y_ty\,dx\right|&\leq(\v/\lambda)|\var''|E^{\mu+1}=-(\v/\lambda)\left(\var'E^{\mu+1}\right)'+(\v(\mu+1)/\lambda)\var'E'E^{\mu}\notag\\&\leq-(\v/\lambda)\left(\var'E^{\mu+1}\right)',\text{ as } |\var''(t)|=-\var''(t),\end{align}where in the equality, we use the concavity of $\var$, and in the last inequality, we use the fact that the energy is nonincreasing.\\ The application of H$\ddot{\text{o}}$lder's inequality yields
\begin{align}\label{wld04}
2\v\var'E^{\mu}\int_\Om |y_t|^2\,dx\leq C\v\var'E^{\mu}|y_t|_p^2&\leq C\v\var'E^{\mu}|E'(t)|^{\frac{2}{p}}\notag\\&=C\v\var'E^{\frac{(\mu+1)(p-2)}{p}+\frac{2\mu-(p-2)}{p}}|E'(t)|^{\frac{2}{p}}.\end{align}
Using Young inequality, and assuming  $ \displaystyle \mu \geq \frac{(p-2)}{2} $, we derive from \eqre{wld4}:
\begin{align}\label{wld040}
2\v\var'E^{\mu}\int_\Om |y_t|^2\,dx\leq \frac{\v(p-2)}{p}\var'E^{\mu+1}+C\v E(0)^{\frac {2\mu-(p-2)}{2}}|E'(t)|.\end{align}
For the last integral, using H$\ddot{\text{o}}$lder's inequality, we find 
\begin{align}\label{wld05}
\left|c\v\var'E^{\mu}\int_\Om |y_t|^{p-2}y_ty\,dx\right|\leq c\v\var'E^{\mu}|y_t(t)|_p^{p-1}|y(t)|_p\leq c\v\var'E^{\mu}|E'(t)|^{\frac{p-1}{p}}|y(t)|_p.\end{align} 
Combining \eqref{wld01}-\eqref{wld05}, we derive 
\begin{align}\label{nwld06}
\left(E_\v+(\v/\lambda)\var'E^{\mu+1}\right)'(t)&\leq \left(1-C\v \left(E(0)^\mu+E(0)^{\frac {2\mu-(p-2)}{2}}\right)\right)E'(t)-\frac{\v(p+2)}{p}\var'E^{\mu+1}\notag\\&\hskip.3cm+
 c\v\var'E^{\mu}|E'(t)|^{\frac{p-1}{p}}|y(t)|_p.\end{align} Estimating the term $|y(t)|_p$ now requires a different tool as we can no longer invoke the Gagliardo-Nirenberg interpolation inequality or Sobolev embedding; neither is available as we are dealing with the supercritical setting and weak solutions.
 \vskip.2cm
\noindent 
{\sl Step 2.}
It is here that we use Proposition \ref{preg}. The combination of \eqre{wld05} and \eqre{nest1} yields
\begin{align}\label{wld052}
c \v\var'E^{\mu}|E'(t)|^{\frac{p-1}{p}}|y(t)|_p\leq C(p)\v|y^0|_p\var'E^{\mu}|E'(t)|^{\frac{p-1}{p}}+C(p)\v (1+t)^{p-1\over p}E(0)^{1\over p}\var'E^{\mu}|E'(t)|^{\frac{p-1}{p}}.\end{align}
Noticing that
$$E^\mu=E^{{\mu+1\over p}+{\mu(p-1)-1\over p}},$$ and using Young inequality,  assuming $\mu(p-1)\geq 1$, we derive from \eqre{wld052} :
\begin{align}\label{wld053}
c\v\var'E^{\mu}|E'(t)|^{\frac{p-1}{p}}|y(t)|_p\leq \frac{2\v}{p}\var'E^{\mu+1}+C_1\v(1+t)\var'\left(|y^0|_p^{p\over p-1}E(0)^{\mu(p-1)-1\over p-1}+E(0)^\mu\right)|E'(t)|.\end{align}
Choosing the function $\var$ such that $(1+t)\var'$ is uniformly bounded, setting $\mu=\mu_p$, and gathering \eqre{nwld06} and \eqre{wld053}, we derive
\begin{align}\label{wld060}
&\left(E_\v+(\v/\lambda)\var'E^{\mu+1}\right)'(t)\notag\\&\leq -\v\var'E^{\mu+1}(t)-|E'(t)|\left(1-C_2\v\left(E(0)^\mu+E(0)^{\frac {2\mu-(p-2)}{2}}+|y^0|_p^{p\over p-1}E(0)^{\mu(p-1)-1\over p-1}\right)\right).\end{align}
Choosing $\v$ small enough, it follows
\begin{align}\label{wld061}
\left(E_\v+(\v/\lambda)\var'E^{\mu+1}\right)'(t)&\leq -\v\var'E^{\mu+1}(t).\end{align}
Observe that, thanks to \eqre{eeq}, we have
\begin{align}\label{wld070}
 E^{\mu+1}(t)&=\frac{1}{2}E^{\mu+1}(t)+\frac{1}{2}E^{\mu+1}(t)\notag\\&\geq \frac{\lambda^{\mu+1}}{2\left(\lambda+\v\var'(0)E(0)^\mu\right)^{\mu+1}}E_\v^{\mu+1}(t) +\frac{1}{2}E^{\mu+1}(t)\\&\geq \delta(\varepsilon) \left(E_\v^{\mu+1}(t)+E^{\mu+1}(t)\right)\notag\end{align} where $\delta(\v) \ge \delta_0>0$ for $\v$ less than $1$. \\
Now, for each $t\geq0$, set 
$$F_{\v,\mu}(t)=E_\v(t)+(\v/\lambda)\var'E^{\mu+1}(t).$$
Then, one checks
\begin{align}\label{wld08}
F_{\v,\mu}(t)^{\mu+1}&\leq 2^\mu\left(E_\v^{\mu+1}(t)+(\v\var'(0)/\lambda)^{\mu+1}E(0)^{\mu(\mu+1)}E^{\mu+1}(t)\right)\notag\\&\leq
 2^\mu\max\{1,(\v\var'(0)/\lambda)^{\mu+1}E(0)^{\mu(\mu+1)}\}\left(E_\v^{\mu+1}(t)+E^{\mu+1}(t)\right).\end{align}
The combination of \eqre{wld061}-\eqre{wld08} yields for some positive constant $\gamma=\gamma(\Om,p,N,|y^0|_p,E(0),\var'(0)):$ 
$$F_{\v,\mu}'(t)\leq -\gamma \var' F_{\v,\mu}(t)^{\mu+1},\text{ a.e. }t\geq0.$$
The application of Lemma \re{ndi} and \eqre{eeq} yield the claimed decay estimate, once we pick, say, $$\var(t)=\log(2+t)-\log(2),\quad\forall t\geq0.$$
This completes the proof of Theorem \re{stab}.\qed

\begin{rem} The proof of Theorem \re{stab} just provided is valid for all weak solutions for which the initial displacement is further required to lie in $L^p(\Omega)$ with $(N-2)p>2N$. The logarithmic decay estimate comes from estimating $|y(t)|_p$ in terms of the initial data and $p$, without any recourse to Sobolev embedding theorems. 
\end{rem}
\section{Some extensions of the method}
The technique devised to prove Theorem \re{stab} is flexible and can be adapted to other second order evolution systems. In the sequel, we discuss some extensions of our result.

\subsection{ A wave equation with two damping mechanisms.} Let $a$ and $b$ be positive constants. Let $2<p\leq q$ be constants as well. Consider the following system 
\begin{equation}\label{s2e1}\left\{
\begin{array}{lll}y_{tt}-\Delta y+a|y_t|^{p-2}y_t+b|y_t|^{q-2}y_t=0\hbox{ in
}(0,\infty)\times\Om\cr y=0\hbox{ on }(0,\infty)\times\G\cr
y(0)=y^0,\quad y_t(0)=y^1\hbox{ in }\Omega.\end{array}\right.\end{equation}
 
 The energy of this system is given as in Section \re{weqs}  by \begin{equation}\label{s2e2}E(t)={1\over 2}\int_\Omega\left\{|
 y_t(t,x)|^2+|\nabla y(t,x)|^2\right\}\,dx,\quad\forall
t\geq 0.\end{equation} The energy $E$ is a nonincreasing function
of the time variable $t$ and its derivative satisfies
\begin{equation}\label{s2e3}E'(t)=-\int_{\Om}\left\{ a|y_t(t,x)|^p+b|y_t(t,x)|^q\right\}\,dx,\quad\forall t\geq0.\end{equation}
The purpose of considering this new system is to find out which of the two damping mechanisms dictates the long time dynamics of the system. 
As far as existence and uniqueness of weak or strong solutions are concerned, for this new system we can apply the results of Section 2 with $g(v):= a|v|^{p-2}v+b|v|^{q-2}v$. We also have 

\begin{cory}  If we further assume that $y^0$ lies in $L^q(\Om)$, then the weak solution of \eqre{s2e1} satisfies $y\in W_{\ell oc}^{1,q}(0,\infty; L^q(\Om))$ with
 \begin{align}\label{nest21} \forall t\geq0,\quad |y(t)|_q\leq2^{q-1\over q}\left(|y_0|_q+b^{- \frac{1}{q}}t^{q-1\over q}E(0)^{1\over q}\right).\end{align} \end{cory}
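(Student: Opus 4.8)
The plan is to mimic, almost verbatim, the proof of Proposition~\ref{preg}, the only new feature being that the dissipation identity now contains \emph{two} nonnegative contributions, so that discarding the one associated with the exponent $p$ still leaves enough to control $y_t$ in $L^q$. Concretely, integrating \eqref{s2e3} on $(0,t)$ and dropping the term $a\int_0^t\!\int_\Om|y_t|^p$ gives $b\int_0^t\!\int_\Om |y_t(s,x)|^q\,dx\,ds \le E(0)-E(t)\le E(0)$, i.e. $\int_0^t |y_t(s)|_q^q\,ds\le E(0)/b$, which is the exact analogue of the estimate exploited in Proposition~\ref{preg} with $(c,p)$ replaced by $(b,q)$. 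Note that the structural hypotheses needed to invoke Propositions~\ref{ss} and~\ref{wp} hold here for $g(v)=a|v|^{p-2}v+b|v|^{q-2}v$, as already observed before the statement.

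First I would work with a strong solution $y$ of \eqref{s2e1}. Since $y\in C^1([0,\infty);L^2(\Om))$, for a.e.\ $x\in\Om$ one has $y(t,x)=y^0(x)+\int_0^t y_t(s,x)\,ds$, whence, by Young's inequality $(\alpha+\beta)^q\le 2^{q-1}(\alpha^q+\beta^q)$ followed by Hölder's inequality in the variable $s$,
$$ |y(t,x)|^q\le 2^{q-1}\Big(|y^0(x)|^q+\left(\int_0^t|y_t(s,x)|\,ds\right)^q\Big)\le 2^{q-1}\Big(|y^0(x)|^q+t^{q-1}\int_0^t|y_t(s,x)|^q\,ds\Big). $$
Integrating over $\Om$, using Fubini's theorem and the dissipation bound above, I obtain $|y(t)|_q^q\le 2^{q-1}\big(|y^0|_q^q+t^{q-1}E(0)/b\big)$; taking $q$-th roots and using the subadditivity of $r\mapsto r^{1/q}$ on $[0,\infty)$ produces precisely \eqref{nest21} for strong solutions. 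The same uniform bound shows $y_t\in L^q_{\ell oc}(0,\infty;L^q(\Om))$, hence $y\in W^{1,q}_{\ell oc}(0,\infty;L^q(\Om))$.

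It then remains to pass to weak solutions. Given $(y^0,y^1)\in(H_0^1(\Om)\cap L^q(\Om))\times L^2(\Om)$, I would choose data $(y_n^0,y_n^1)$ admissible for strong solutions of \eqref{s2e1} (e.g.\ $C_c^\infty(\Om)$ functions) with $y_n^0\to y^0$ in $H_0^1(\Om)\cap L^q(\Om)$ and $y_n^1\to y^1$ in $L^2(\Om)$; by Proposition~\ref{wp} the corresponding strong solutions $y_n$ converge to the weak solution $y$ in $C([0,T];H_0^1(\Om))\cap C^1([0,T];L^2(\Om))$ for every $T>0$. Each $y_n$ satisfies \eqref{nest21} with $|y_n^0|_q$ and $E_n(0)$ in place of $|y^0|_q$ and $E(0)$, and these quantities are uniformly bounded (since $(y_n^0,y_n^1)$ converges in $H_0^1\times L^2$ and in $L^q$); passing to a subsequence converging a.e.\ in $(0,T)\times\Om$ and using Fatou's lemma transfers the bound to $y$, while $\sup_n\int_0^T|(y_n)_t|_q^q\,ds\le \sup_n E_n(0)/b<\infty$ yields, via weak compactness in $L^q((0,T)\times\Om)$ together with the identification of the weak limit as $y_t$, that $y_t\in L^q_{\ell oc}(0,\infty;L^q(\Om))$.

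The main point requiring care is this last limiting argument --- specifically the density of strong-solution data in $(H_0^1(\Om)\cap L^q(\Om))\times L^2(\Om)$ and the weak-$L^q$ compactness step used to upgrade the regularity of the time derivative --- everything else being the routine Young--Hölder--Fubini--energy chain already used in Proposition~\ref{preg}. Since that proposition is really the case $a=0$ (or $b=0$, after relabelling) of the present corollary, one may alternatively phrase the proof as: "identical to the proof of Proposition~\ref{preg}, keeping only the $b|y_t|^{q-2}y_t$ term of the damping in the energy identity."
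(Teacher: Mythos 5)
Your proof is correct and follows essentially the same route as the paper: the corollary is obtained exactly as Proposition~\ref{preg}, the only new point being the one you identify, namely that discarding the nonnegative $a$-term in the integrated dissipation identity \eqref{s2e3} still gives $b\int_0^t|y_t(s)|_q^q\,ds\le E(0)$, after which the Young--H\"older--Fubini chain runs verbatim with $(c,p)$ replaced by $(b,q)$. The paper carries out that chain directly on the weak solution (which lies in ${\cal C}^1([0,\infty);L^2(\Om))$ by definition, the energy identity being justified as in \cite{h4}), so your additional strong-to-weak approximation layer is harmless but not required.
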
 
   
We now turn to the statement of our decay estimate results.
\begin{thm}\label{stabn01}{\bf (Energy decay of strong solutions.)} Assume that either $\Om$ is convex, or $\Omega$ has a $C^2$ boundary. Let $q>p>2$. Let  $(y^0,y^1)\in \left(H^2(\Om)\cap H_0^1(\Om)\right) \times    \left(H^1_0(\Om)\cap L^{2(q-1)}(\Om)\right)$. Assume that $q$ further satisfies the inequalities
   \begin{align}\label{expc} q(N-4)\leq 2N<q(N-2).\end{align} The energy of the corresponding strong solution  of \eqref{s2e1} satisfies the decay estimate:
\begin{equation}\label{nene1}
E(t)\leq{K_1}{\left(1+t\right)^{-{\frac{1}{\mu_{p,q,N}}}}},\quad\forall t\geq0,\end{equation}where $\mu_{p,q,N}=\max\left\{\frac{p-2}{2},\frac{(q-2)(N-4)}{8(q-1)}\right\}$, and $K_1=K_1(\Om,N, E(0), p,q,F(0))$ is a positive constant.  
\end{thm}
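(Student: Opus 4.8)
The proof runs exactly along the lines of the proof of Theorem~\re{stab0}, the only new feature being that two damping nonlinearities must be absorbed simultaneously. Since $q>p>2$ and $(y^0,y^1)\in\left(H^2(\Om)\cap H^1_0(\Om)\right)\times\left(H^1_0(\Om)\cap L^{2(q-1)}(\Om)\right)$, we have $g(y^1)=a|y^1|^{p-2}y^1+b|y^1|^{q-2}y^1\in L^2(\Om)$, so Proposition~\re{ss} provides a unique strong solution $y$ with $|\Delta y(t)|_2^2\le F(t)\le F(0)$ for all $t\ge0$; together with elliptic regularity this gives $\|y(t)\|_{H^2(\Om)}\le CF(0)^{1/2}$. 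For $\v>0$ and $\mu\ge0$ to be chosen, introduce $E_\v(t)=E(t)+\v E^{\mu}\int_\Om yy_t\,dx$; by Cauchy--Schwarz and Poincar\'e, $E_\v\sim E$ once $\v$ is small, exactly as in \eqref{eeq}.

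Differentiating $E_\v$, using the first equation in \eqref{s2e1} and $\int_\Om|y_t|^2\,dx-\int_\Om|\nabla y|^2\,dx=2\int_\Om|y_t|^2\,dx-2E$, one obtains a.e.\ $t>0$
\begin{align*}
E_\v'(t)=E'(t)-2\v E^{\mu+1}&+2\v E^{\mu}\!\int_\Om|y_t|^2\,dx-a\v E^{\mu}\!\int_\Om|y_t|^{p-2}y_ty\,dx\\
&-b\v E^{\mu}\!\int_\Om|y_t|^{q-2}y_ty\,dx+\mu\v E'E^{\mu-1}\!\int_\Om yy_t\,dx.
\end{align*}
The last term is $\le C\v E(0)^{\mu}|E'(t)|$. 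From \eqref{s2e3} we have both $a|y_t|_p^p\le|E'(t)|$ \emph{and} $b|y_t|_q^q\le|E'(t)|$. Hence, as for \eqref{wld40}, H\"older and Young give, for $\mu\ge\tfrac{p-2}{2}$, $2\v E^{\mu}\int_\Om|y_t|^2\,dx\le\tfrac{\v(p-2)}{p}E^{\mu+1}+C\v E(0)^{\frac{2\mu-(p-2)}{2}}|E'(t)|$. For each $r\in\{p,q\}$, H\"older yields $\bigl|\v E^{\mu}\int_\Om|y_t|^{r-2}y_ty\,dx\bigr|\le\v E^{\mu}|y_t|_r^{r-1}|y|_r\le C\v E^{\mu}|E'(t)|^{\frac{r-1}{r}}|y(t)|_r$; since $p(N-4)\le q(N-4)\le2N$, Corollary~\re{interp} applies both with exponent $p$ and with exponent $q$, giving $|y(t)|_r\le C\|y(t)\|_{H^2(\Om)}^{\d_r}|y(t)|_2^{1-\d_r}\le CF(0)^{\d_r/2}E(t)^{\frac{1-\d_r}{2}}$ with $\d_r=\tfrac{N(r-2)}{4r}$. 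Feeding this back and applying Young's inequality with exponents $r$ and $\tfrac{r}{r-1}$ absorbs the $r$-damping term into a small multiple of $\v E^{\mu+1}$ plus a term $\le C\v|E'(t)|$, provided
$$\mu\ge\frac{(r-2)(N-4)}{8(r-1)}\qquad(r=p,q)$$
(when $r(N-4)=2N$ one has $\d_r=1$, the $E$-factor disappears, and this reduces to $\mu\ge\tfrac1{r-1}$, the same number since then $(r-2)(N-4)=8$ --- this is ``Case~1'' in the proof of Theorem~\re{stab0}).

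Because $x\mapsto\tfrac{x-2}{x-1}$ is increasing on $(2,\infty)$, one has $\tfrac{(q-2)(N-4)}{8(q-1)}\ge\tfrac{(p-2)(N-4)}{8(p-1)}$ when $N\ge4$, while for $N=3$ both quantities are negative and the binding constraint is $\mu\ge\tfrac{p-2}{2}$; hence all constraints are met by $\mu=\mu_{p,q,N}=\max\bigl\{\tfrac{p-2}{2},\,\tfrac{(q-2)(N-4)}{8(q-1)}\bigr\}$. Collecting the estimates, the positive $E^{\mu+1}$-contributions add up to at most $\v\bigl(\tfrac{p-2}{p}+\tfrac1p+\tfrac1q\bigr)E^{\mu+1}<2\v E^{\mu+1}$, so a net dissipative term survives, and the coefficient of $E'(t)$ has the form $1-C\v(\cdots)\ge0$ for $\v$ small; since $E'\le0$ this leaves $E_\v'(t)\le-c_0\v E^{\mu+1}(t)$ with $c_0>0$. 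Using $E_\v\sim E$, we get $E_\v'(t)\le-\b E_\v^{\mu+1}(t)$ for some $\b=\b(\Om,N,p,q,E(0),F(0))>0$; integrating this classical differential inequality and invoking $E_\v\sim E$ once more yields $E(t)\le K_1(1+t)^{-1/\mu_{p,q,N}}$.

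The only genuinely delicate point, relative to Theorem~\re{stab0}, is the bookkeeping ensuring that a single exponent $\mu$ simultaneously dominates the threshold from the kinetic correction term ($\mu\ge\tfrac{p-2}{2}$) and those from \emph{both} damping nonlinearities; among the latter the binding one is the $q$-threshold, which is why the decay exponent is governed by $\tfrac{(q-2)(N-4)}{8(q-1)}$ rather than its $p$-analogue. The borderline value $q(N-4)=2N$, where the Gagliardo--Nirenberg interpolation degenerates ($\d_q=1$), has to be handled as a separate case exactly as in the proof of Theorem~\re{stab0}, but it produces the same $\mu$ and the same differential inequality, hence no change in the conclusion.
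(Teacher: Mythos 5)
Your proof is correct and follows essentially the same route the paper indicates: the paper itself only sketches the argument for Theorem \ref{stabn01} in the remarks following it (perturbed energy as in Theorem \ref{stab0}, the kinetic term controlled through the $p$-damping giving the threshold $\mu\ge\frac{p-2}{2}$, and Gagliardo--Nirenberg applied to both $|y|_p$ and $|y|_q$ giving the thresholds $\frac{(r-2)(N-4)}{8(r-1)}$, of which the $q$-one is binding). Your write-up is a faithful and correctly bookkept expansion of exactly that plan, including the degenerate case $q(N-4)=2N$.
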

 
\begin{thm}\label{stabn1}{\bf (Energy decay of weak solutions)} Let $q>p>2$ with $(N-2)p>2N$.  Let $y^0\in H_0^1(\Om)\cap L^q(\Om)$ and $y^1\in L^2(\Om)$. The energy of the corresponding  weak solution of \eqre{s2e1} satisfies the decay estimate \begin{equation}\label{nene11}
E(t)\leq {K}{(\log(2+t))^{-{1\over \mu_{p,q}}}}\text{ if }(N-2)p>2N \quad\forall t\geq0,\end{equation} where $\mu_{p,q}=\max\{\frac{p-2}{2}, \frac{1}{q-1}\}$, $K=K(\Om,N, E(0), p,q,| y^0|_p,| y^0|_q)$ some positive constant.  
\end{thm}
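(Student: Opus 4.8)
The plan is to repeat the perturbed-energy argument used for Theorem \re{stab} almost verbatim; the only genuinely new point is the treatment of the weaker damping term $a|y_t|^{p-2}y_t$. As in that proof, fix a concave function $\var\in{\cal C}^2([0,\infty))$ with $\var(0)=0$, $\var'(t)>0$ on $[0,\infty)$, $\var(t)\to\infty$ as $t\to\infty$, and $(1+t)\var'(t)$ bounded (for instance $\var(t)=\log(2+t)-\log2$); let $\v>0$ and $\mu\geq0$ be parameters to be fixed later, and put $E_\v(t)=E(t)+\v\var'(t)E(t)^\mu\int_\Om yy_t\,dx$. Exactly as in \eqre{eeq1}, the Cauchy--Schwarz and Poincar\'e inequalities give $E_\v\simeq E$ once $\v$ is small. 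Differentiating $E_\v$ and using the first line of \eqre{s2e1} reproduces the identity \eqre{wld01}, except that the damping contribution now reads $-\v\var'E^\mu\big(a\int_\Om|y_t|^{p-2}y_ty\,dx+b\int_\Om|y_t|^{q-2}y_ty\,dx\big)$; the $\var''$-term is absorbed using concavity as in \eqre{wld03}, producing a $-(\v/\lambda)(\var'E^{\mu+1})'$ on the left-hand side, and the term $\mu\v\var'E'E^{\mu-1}\int_\Om yy_t$ is bounded by $C\v E(0)^\mu|E'(t)|$ as before.

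Next I would estimate the remaining terms one at a time, by H\"older's and Young's inequalities. The kinetic term is handled as in \eqre{wld04}--\eqre{wld040}: $2\v\var'E^\mu\int_\Om|y_t|^2\,dx\leq C\v\var'E^\mu|y_t|_p^2\leq C\v\var'E^\mu|E'(t)|^{2/p}$, and, assuming $\mu\geq(p-2)/2$, this Young-splits into $\tfrac{\v(p-2)}{p}\var'E^{\mu+1}+C\v|E'(t)|$. The strong damping term is handled precisely like the single damping term in Theorem \re{stab}: by H\"older and \eqre{s2e3} (which gives $b\,|y_t|_q^q\leq -E'$), one has $b\v\var'E^\mu\big|\int_\Om|y_t|^{q-2}y_ty\,dx\big|\leq b^{1/q}\v\var'E^\mu|E'(t)|^{(q-1)/q}|y(t)|_q$; inserting the bound \eqre{nest21}, which gives $|y(t)|_q\leq C(1+t)^{(q-1)/q}$, and applying Young with exponents $q$ and $q/(q-1)$ under the condition $\mu(q-1)\geq1$, one obtains $\leq\tfrac{\v}{q}\var'E^{\mu+1}+C\v(1+t)\var'|E'(t)|$, and since $(1+t)\var'$ is bounded this is $\leq\tfrac{\v}{q}\var'E^{\mu+1}+C\v|E'(t)|$.

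The only genuine difficulty, and the step I expect to be the crux, is the weak damping term $a\v\var'E^\mu\big|\int_\Om|y_t|^{p-2}y_ty\,dx\big|\leq a\v\var'E^\mu\int_\Om|y_t|^{p-1}|y|\,dx$. The naive route --- H\"older with $|y_t|_p$, together with the $L^p$-analogue of \eqre{nest1} (available since $\Om$ is bounded, so $L^q(\Om)\subset L^p(\Om)$, and since $a\,|y_t|_p^p\leq -E'$) --- does work, but only under the condition $\mu(p-1)\geq1$, which is strictly stronger than $\mu(q-1)\geq1$ when $p<q$. To reach the sharper exponent $\mu_{p,q}=\max\{\tfrac{p-2}{2},\tfrac{1}{q-1}\}$ stated in the theorem I would instead split $\Om$ at a small threshold $\eta>0$. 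On $\{|y_t|<\eta\}$ one has $|y_t|^{p-1}\leq\eta^{p-2}|y_t|$, so Cauchy--Schwarz and Poincar\'e give $\int_{\{|y_t|<\eta\}}|y_t|^{p-1}|y|\,dx\leq\eta^{p-2}|y_t|_2|y|_2\leq\tfrac{2\eta^{p-2}}{\lambda}E$, whose contribution $\tfrac{2a\eta^{p-2}}{\lambda}\v\var'E^{\mu+1}$ is absorbed into the $-2\v\var'E^{\mu+1}$ once $\eta$ is small enough. On $\{|y_t|\geq\eta\}$, since $p<q$ we have $|y_t|^{p-1}=|y_t|^{p-q}|y_t|^{q-1}\leq\eta^{p-q}|y_t|^{q-1}$, so $\int_{\{|y_t|\geq\eta\}}|y_t|^{p-1}|y|\,dx\leq\eta^{p-q}|y_t|_q^{q-1}|y|_q$, which is exactly the quantity already controlled for the strong damping term and is estimated in the same way, requiring only $\mu(q-1)\geq1$. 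This dichotomy is precisely why the final exponent carries $1/(q-1)$ and not $1/(p-1)$.

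Collecting these estimates with $\mu=\mu_{p,q}$, then $\eta$ small, then $\v$ small, one arrives (as in \eqre{wld061}) at $\big(E_\v+(\v/\lambda)\var'E^{\mu+1}\big)'(t)\leq-c_0\v\var'(t)E(t)^{\mu+1}$ for a.e. $t\geq0$. Setting $F(t)=E_\v(t)+(\v/\lambda)\var'(t)E(t)^{\mu+1}$, the equivalence $E_\v\simeq E$ gives $F^{\mu+1}\leq C(E_\v^{\mu+1}+E^{\mu+1})\leq CE^{\mu+1}$, hence $F'(t)\leq-\gamma\,\var'(t)F(t)^{\mu+1}$; Lemma \re{ndi} applied with $\beta=\mu$ and $\var(t)=\log(2+t)-\log2$, together with $F\simeq E$, then yields $E(t)\leq K(\log(2+t))^{-1/\mu_{p,q}}$, which is \eqre{nene11}. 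For weak solutions, the absolute continuity of $E$ and the a.e. validity of \eqre{s2e3} are justified exactly as in the proof of Theorem \re{stab}, the damping being odd.
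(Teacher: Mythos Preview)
Your argument is correct and reaches the stated exponent $\mu_{p,q}=\max\{\tfrac{p-2}{2},\tfrac{1}{q-1}\}$, but the treatment of the weak damping term $a|y_t|^{p-2}y_ty$ follows a genuinely different route from the paper's. The paper does not split on the level set of $|y_t|$; instead it applies H\"older in the $p$-norm to get $|y_t|_p^{p-1}|y|_p$ and then estimates $|y|_p$ by interpolation between $L^2$ and $L^q$, namely $|y|_p\le |y|_2^{2(q-p)/p(q-2)}|y|_q^{q(p-2)/p(q-2)}$, using $|y|_2\le CE^{1/2}$ via Poincar\'e and bounding $|y|_q$ through \eqre{nest21}. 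After Young's inequality this yields the constraint $\mu\ge \tfrac{p-2}{(q-2)(p-1)}$, which one checks is strictly weaker than $\tfrac{1}{q-1}$ when $p<q$, so it does not affect $\mu_{p,q}$.

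Your $\eta$-dichotomy instead interpolates on the \emph{velocity}: small $|y_t|$ is absorbed by the energy itself, and large $|y_t|$ is dominated pointwise by the $q$-power, reducing the $p$-term to the already-handled $q$-term. This is arguably more elementary (no Lebesgue-space interpolation needed) and makes transparent why the weaker damping never degrades the decay exponent below what the stronger damping alone would give. The paper's interpolation approach, on the other hand, avoids introducing the auxiliary parameter $\eta$ and the associated order-of-limits (fix $\eta$, then $\v$); it also tracks an explicit intermediate constraint on $\mu$ coming from the $p$-term, which could become relevant in variants of the problem. Either way the final inequality $F'\le-\gamma\varphi'F^{\mu+1}$ and the appeal to Lemma~\re{ndi} proceed identically.
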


\begin{rem}The proofs of Theorems \re{stabn01} and \re{stabn1} go along the lines of the proofs of Theorems \re{stab0} and \re{stab} respectively. We want to draw the reader's attention to the fact that the decay rate in Theorem \re{stabn01} is $O((1+t)^{-{2\over p-2}})$ provided that
\begin{align}\label{ncst}(q-2)(N-4)\leq4(p-2)(q-1).\end{align}In this case, the long time dynamics of the system is dictated by the damping involving the exponent $p$.  When \eqref{ncst} fails, the long time dynamics of the system is dictated by the damping involving the exponent $q$. Notice that if we allow the parameter $p$ to be equal to 2 while keeping $q>2$ with $q(N-2)\leq2N$, then all weak solutions will decay exponentially.  \end{rem}
\begin{rem} We find it helpful to include some details about how we got the constants $\mu_{p,q,N}$ and $\mu_{p,q}$ in Theorem \re{stabn01} and Theorem \re{stabn1} respectively. Notice that the restrictions on $\mu$ comes from estimating the terms $|y_t(t)|_2^2$ and $\int_\Omega \left(a|y_t|^{p-2}y_ty+b |y_t|^{q-2}y_ty\right)\,dx$. \\ We estimate $|y_t(t)|_2$ in terms of $|y_t(t)|_p$. Using H$\ddot{o}$lder's inequality, it follows:
$$|y_t(t)|_2^2\leq C|y_t(t)|_p^2\leq C|E'(t)|^{2\over p}.$$This leads to the restriction $\mu\geq\frac{p-2}{2}$, as the proofs given above show.\\ As for the other term, applying H$\ddot{o}$lder's inequality once more, we find
\begin{align}\label{ncrpe}
\left|\int_\Omega \left(a|y_t|^{p-2}y_ty+b |y_t|^{q-2}y_ty\right)\,dx\right|&\leq C\left(|y_t(t)|_p^{p-1}|y(t)|_p+|y_t(t)|_q^{q-1}|y(t)|_q\right)\notag\\&
\leq C\left(|E'(t)|_p^{p-1\over p}|y(t)|_p+|E'(t)|_q^{q-1\over q}|y(t)|_q\right).
\end{align}We estimate $|y(t)|_q$ as we did in the proofs of Theorems \re{stab0} and \re{stab} respectively. When estimating $|y|_p$, we use two different approaches; for Theorem \re {stabn01}, we estimate $|y(t)|_p$ as we did in the proof of  Theorems \re{stab0}, and for Theorem \re{stabn1}, we estimate $|y(t)|_p$ by interpolation as follows
$$|y(t)|_p\leq |y(t)|_2^{2(q-p)\over p(q-2)} |y(t)|_q^{q(p-2)\over p(q-2)}.$$ The interpolation inequality is established by starting with
$$\int_\Omega|y(t,x)|^p\,dx=\int_\Omega|y(t,x)|^{\tau p}|y(t,x)|^{(1-\tau) p}\,dx$$for some $\tau$ in $(0,1)$ to be determined. Next, using H$\ddot{o}$lder's inequality, we find
$$\int_\Omega|y(t,x)|^{\tau p}|y(t,x)|^{(1-\tau) p}\,dx\leq |y(t)|_2^{\tau p}|y(t)|_{2(1-\tau)p\over2-\tau p}^{p(1-\tau )},$$ assuming $\tau<2/p$. Now, setting 
${2(1-\tau)p\over2-\tau p}=q$, we readily derive $\tau=2(q-p)/p(q-2)$. Hence the claimed estimate.\\ This interpolation leads to the restriction $\mu\geq \frac{p-2}{(q-2)(p-1)}$: indeed we have
\begin{align}\label{nstep} E^\mu|y|_p|E'|^{p-1\over p}&\leq CE^{{\mu+1\over p}+\frac{q-p+\mu p(q-2)-(\mu+1)(q-2)}{p(q-2)}}|y|_q^{q(p-2)\over p(q-2)}|E'|^{p-1\over p}\notag\\&=CE^{{\mu+1\over p}}E^{\frac{\mu (q-2)(p-1)-(p-2)}{p(q-2)}}|y|_q^{q(p-2)\over p(q-2)}|E'|^{p-1\over p}\\&\leq CE^{{\mu+1\over p}}E(0)^{\frac{\mu (q-2)(p-1)-(p-2)}{p(q-2)}}|y|_q^{q(p-2)\over p(q-2)}|E'|^{p-1\over p},\end{align}where the last inequality holds under the stated restriction on $\mu$.\\
Notice that estimating $|y|_q$ in Theorem \re{stabn1} leads to $\mu\geq\frac{1}{q-1}$. Hence the claimed expression of $\mu_{p,q}$.
\end{rem}
\subsection{A hinged plate equation with a nonlinear frictional damping.} Let $c>0$ be a constant and consider the
following damped Euler-Bernoulli
equation:
\begin{equation}\label{hse1}\left\{ \begin{array}{lll}&y_{tt}+\Delta^2 y+ c| y_t|^{p-2} y_t   =
0\hbox{  in }
(0,\infty)\times\Omega \cr &y={\Delta y}   =  0      \hbox{ on }
(0,\infty)\times{\Gamma}\cr &{y}(0)   = y^0,
\quad {y_t}(0)   =  y^1     \hbox{ in }        \Omega.\cr
\end{array}\right.\end{equation}
 The energy of this system is given by
$$E(t)=\frac{1}{2}\int_\Om\{|y_t(t,x)|^2+|\Delta y(t,x)|^2\}\,dx,\quad\forall t\geq0,$$and it is a nonincreasing function of the time variable $t$ as we have 
$$E'(t)=-c\int_\Om | y_t(t,x)|^{p}\,dx,\text{ for a.e. }t>0.$$
For this new system, strong and weak solutions are defined in an analogous way as above. We have the following energy decay estimate results:
\begin{thm}\label{stabp0}{\bf (A decay property for strong solutions.)} Assume that either $\Om$ is convex, or $\Omega$ has a $C^4$ boundary. Let $p>2$. Let  $(y^0,y^1)\in \left(H^3(\Om)\cap H_0^1(\Om)\right) \times    \left(H^1_0(\Om)\cap L^{2(p-1)}(\Om)\right)$. Assume that $p$ further satisfies the inequalities
   $$p(N-6)\leq 2N<p(N-4).$$ Then the energy of the corresponding strong solution  of \eqref{hse1} satisfies the decay estimate:
\begin{equation}\label{pene1}
E(t)\leq{K_1}{\left(1+t\right)^{-{\frac{1}{\mu_{p,N}}}}},\quad\forall t\geq0,\end{equation}where $\mu_{p,N}=\frac{(p-2)}{2}\max\left\{1,\frac{N-6}{6(p-1)}\right\}$, and $K_1=K_1(\Om,N, E(0), p,F(0))$ is a positive constant.  The function $F$ is defined by 
$$F(t)=\frac{1}{2}\int_\Om\{|\nabla y_t(t,x)|^2+|\nabla\Delta y(t,x)|^2\}\,dx,\quad\forall t\geq0,$$ and it can be shown that it is nonincreasing.
\end{thm}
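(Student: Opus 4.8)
The plan is to re-run the perturbed-energy argument of the proof of Theorem \re{stab0}, with $-\Delta$ replaced by $\Delta^2$ and $H^2(\Om)$ replaced by $H^3(\Om)$. Two preliminary facts are needed. First, the analogue of Proposition \re{ss}: regularizing the damping by its Yosida approximation (as in the remark following Proposition \re{ss}), multiplying \eqref{hse1} by $-\Delta y_t$, integrating by parts in $x\in\Om$ and in $s\in[0,t]$, and using $y_t=\Delta y_t=0$ on $\G$ to annihilate all boundary terms, one gets, after letting the regularization parameter tend to $0$, $F'(t)=-c(p-1)\int_\Om|y_t|^{p-2}|\nabla y_t|^2\,dx\le0$, hence $F(t)\le F(0)$; combining this with Poincaré's inequality and elliptic regularity for $\Delta y\in H^1_0(\Om)$ (this is where the $C^4$ boundary is used) yields $\|y(t)\|_{H^3(\Om)}\le C\,F(t)^{1/2}\le C\,F(0)^{1/2}$. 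Second, the analogue of Corollary \re{interp}: applying Lemma \re{gni} with $k=0$, $m=3$, $q=r=2$, $s=p$, one checks that the admissibility condition is exactly $p(N-6)\le2N$ and holds with $\d=\tfrac{N(p-2)}{6p}\in(0,1]$, so $H^3(\Om)\subset L^p(\Om)$ and, using also $|v|_2\le C|\Delta v|_2$,
\begin{equation*}
|y(t)|_p\le C\,\|y(t)\|_{H^3(\Om)}^{\d}\,|y(t)|_2^{1-\d}\le C\,F(0)^{\frac{N(p-2)}{12p}}\,E(t)^{\frac{2N-p(N-6)}{12p}},\qquad\text{a.e. }t>0.
\end{equation*}

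Next, for $\v>0$ and $\mu\ge0$ to be fixed, set $E_\v(t)=E(t)+\v E^\mu\int_\Om yy_t\,dx$; Cauchy--Schwarz and Poincaré give the equivalence $(1-\tfrac{\v E(0)^\mu}{\l^2})E\le E_\v\le(1+\tfrac{\v E(0)^\mu}{\l^2})E$ for $\v$ small, $\l^2$ being the first Dirichlet eigenvalue of $-\Delta$. Differentiating $E_\v$, using the first line of \eqref{hse1} and the identity $\int_\Om y\,\Delta^2 y\,dx=\int_\Om|\Delta y|^2\,dx$ (both boundary terms vanish because $y=\Delta y=0$ on $\G$), one obtains, exactly as in \eqref{wld1},
\begin{equation*}
E_\v'(t)=E'(t)-2\v E^{\mu+1}+2\v E^\mu\!\int_\Om|y_t|^2\,dx-c\v E^\mu\!\int_\Om|y_t|^{p-2}y_ty\,dx+\mu\v E'E^{\mu-1}\!\int_\Om yy_t\,dx.
\end{equation*}
The cross term is $O(\v E(0)^\mu|E'(t)|)$ as in \eqref{wld2}; by Hölder and $-E'=c|y_t|_p^p$ one has $2\v E^\mu\int_\Om|y_t|^2\le C\v E^\mu|E'|^{2/p}$, which Young's inequality (requiring $\mu\ge\tfrac{p-2}{2}$) transforms into $\tfrac{\v(p-2)}{p}E^{\mu+1}+C\v E(0)^{\frac{2\mu-(p-2)}{2}}|E'|$, exactly as in \eqref{wld40}. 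For the damping integral, Hölder gives $c\v E^\mu|E'|^{\frac{p-1}{p}}|y(t)|_p$; substituting the interpolation bound above, writing $E^\mu E^{\frac{2N-p(N-6)}{12p}}=E^{\frac{\mu+1}{p}}E^{\theta}$ with $\theta=\tfrac{12\mu(p-1)-(p-2)(N-6)}{12p}$, and applying Young's inequality with exponents $p$ and $\tfrac{p}{p-1}$ (and a suitable parameter) produces, under the requirement $\theta\ge0$, i.e. $\mu\ge\tfrac{(p-2)(N-6)}{12(p-1)}$, a bound $\tfrac{\v}{p}E^{\mu+1}+C\v|E'|$ whose constant depends on $E(0)$ and $F(0)$.

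Collecting these estimates, the coefficient of $E^{\mu+1}$ becomes $-2+\tfrac{p-2}{p}+\tfrac1p=-\tfrac{p+1}{p}$, and the two lower bounds on $\mu$ are met with equality precisely by $\mu=\mu_{p,N}=\tfrac{p-2}{2}\max\{1,\tfrac{N-6}{6(p-1)}\}$; choosing $\v$ small so that the coefficient of the nonpositive quantity $E'(t)$ is $\ge0$, we get $E_\v'(t)\le-\tfrac{\v(p+1)}{p}E^{\mu+1}(t)$. The equivalence $E_\v\simeq E$ converts this into $E_\v'(t)\le-\beta E_\v^{\mu+1}(t)$ for some $\beta=\beta(\Om,p,N,E(0),F(0))>0$, and Lemma \re{ndi} (with $\var(t)=t$), together with the equivalence, yields $E(t)\le K_1(1+t)^{-1/\mu_{p,N}}$.

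I do not expect an essential obstacle, since the argument is a transcription of the proof of Theorem \re{stab0}. The points needing genuine care are (i) that the hinged conditions $y=\Delta y=0$ on $\G$ — hence also $\Delta y_t=0$ on $\G$ — keep every integration by parts clean, both in the monotonicity of $F$ and in $\int_\Om y\,\Delta^2 y=\int_\Om|\Delta y|^2$; (ii) that the Sobolev/interpolation bookkeeping now revolves around $H^3$, so the relevant critical exponent is $p=\tfrac{2N}{N-6}$ and the interpolation exponent $\d$ carries a factor $6$ rather than $4$; and (iii) that the formal identity $F'(t)\le0$ must be justified through the Yosida regularization, exactly as recalled after Proposition \re{ss}.
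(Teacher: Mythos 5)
The paper states Theorem \re{stabp0} without proof (it appears in the "extensions" section with the indication that the argument transcribes that of Theorem \re{stab0}), and your proposal is exactly that intended transcription, carried out correctly: the monotonicity of $F$ via the multiplier $-\Delta y_t$ with the hinged conditions killing the boundary terms, the Gagliardo--Nirenberg step with $m=3$ giving $\d=\tfrac{N(p-2)}{6p}$ under $p(N-6)\le 2N$, and the two constraints $\mu\ge\tfrac{p-2}{2}$ and $\mu\ge\tfrac{(p-2)(N-6)}{12(p-1)}$ meeting precisely at $\mu_{p,N}$ all check out. No gaps.
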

 
\begin{thm}\label{stabp1}{\bf (A logarithmic decay rate under a weak additional assumption on $y^0$.)} Let $p>2$.  Let $y^0\in \left(H^2(\Om)\cap H_0^1(\Om)\cap L^p(\Om)\right)$ and $y^1\in L^2(\Om)$. Assuming $(N-4)p>2N$, the energy of the corresponding  weak solution of \eqre{hse1}  satisfies the decay estimate \begin{equation}\label{pene01}
E(t)\leq {K}{(\log(2+t))^{-{1\over \mu_p}}}\quad\forall t\geq0,\end{equation}where $\mu_p=\max\{\frac{p-2}{2}, \frac{1}{p-1}\}$, and $K = K(\Om,N, E(0), p,| y^0|_p)$ is a positive constant.  
\end{thm}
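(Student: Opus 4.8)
The plan is to mimic the proof of Theorem \ref{stab}, replacing the wave operator $-\Delta$ by the biharmonic operator $\Delta^2$ and adjusting the regularity bookkeeping accordingly. The ambient dissipation structure is identical: $E$ is nonincreasing with $E'(t) = -c\int_\Om |y_t|^p\,dx$, so all the Hölder estimates $|y_t|_2^2 \leq C|y_t|_p^2 \leq C|E'(t)|^{2/p}$ and $|y_t|_p^{p-1} \leq C|E'(t)|^{(p-1)/p}$ carry over verbatim. The perturbed energy is again
$$E_\v(t) = E(t) + \v\var' E^\mu \int_\Om y y_t\,dx,$$
with $\var$ a concave $\mathcal C^2$ weight satisfying $\var(0)=0$, $\var'>0$, $\var(\infty)=\infty$, which we eventually take to be $\var(t) = \log(2+t)-\log 2$. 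Differentiating and using the equation in \eqref{hse1} yields, after integration by parts,
$$E_\v'(t) = E'(t) - 2\v\var' E^{\mu+1} + 2\v\var' E^\mu \int_\Om |y_t|^2\,dx - c\v\var' E^\mu \int_\Om |y_t|^{p-2}y_t y\,dx + \v\var'' E^\mu \int_\Om y_t y\,dx + \mu\v\var' E' E^{\mu-1}\int_\Om y y_t\,dx,$$
where now $\int_\Om |\Delta y|^2\,dx$ (the elastic energy term for the plate) appears with the correct sign, exactly as $\int_\Om |\nabla y|^2\,dx$ did in \eqref{wld01}. The Poincaré-type control needed for the $\var''$-term and the last term is $|y|_2 \leq \lambda^{-1}|\Delta y|_2$ (first Dirichlet eigenvalue of $\Delta^2$ under the hinged conditions $y = \Delta y = 0$), so \eqref{wld02}, \eqref{wld03}, \eqref{wld040} and \eqref{eeq1} go through with this $\lambda$.

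Next I would collect, as in \eqref{nwld06}, the intermediate inequality
$$\left(E_\v + (\v/\lambda)\var' E^{\mu+1}\right)'(t) \leq \left(1 - C\v\left(E(0)^\mu + E(0)^{\frac{2\mu-(p-2)}{2}}\right)\right)E'(t) - \frac{\v(p+2)}{p}\var' E^{\mu+1} + c\v\var' E^\mu |E'(t)|^{\frac{p-1}{p}}|y(t)|_p,$$
valid once $\mu \geq (p-2)/2$. The remaining task is to absorb the last term. Here is where **the only genuinely new ingredient** enters: since $y^0 \in L^p(\Om)$, the analogue of Proposition \ref{preg} holds for the plate equation — the proof is literally identical, using only $y \in C^1([0,\infty); L^2(\Om))$, $y(t,x) = y^0(x) + \int_0^t y_t(s,x)\,ds$, Young's and Hölder's inequalities, Fubini, and $E'(t) = -c\int_\Om |y_t|^p$ — so we still have $|y(t)|_p \leq 2^{(p-1)/p}(|y^0|_p + c^{-1/p} t^{(p-1)/p} E(0)^{1/p})$. (One needs $y^0 \in L^p$; the hypothesis $y^0 \in H^2\cap H^1_0$ is only there to guarantee a strong solution exists for the approximating sequence, and plays no role in this estimate.) Substituting this bound into the last term and splitting as in \eqref{wld052}, then invoking Young's inequality with $E^\mu = E^{\frac{\mu+1}{p} + \frac{\mu(p-1)-1}{p}}$ under the restriction $\mu(p-1) \geq 1$, gives
$$c\v\var' E^\mu |E'(t)|^{\frac{p-1}{p}}|y(t)|_p \leq \frac{2\v}{p}\var' E^{\mu+1} + C_1\v(1+t)\var'\left(|y^0|_p^{\frac{p}{p-1}}E(0)^{\frac{\mu(p-1)-1}{p-1}} + E(0)^\mu\right)|E'(t)|.$$
Choosing $\var$ so that $(1+t)\var'$ is bounded (true for $\var = \log(2+t)-\log 2$) and $\mu = \mu_p = \max\{(p-2)/2,\, 1/(p-1)\}$, then $\v$ small, yields
$$\left(E_\v + (\v/\lambda)\var' E^{\mu+1}\right)'(t) \leq -\v\var' E^{\mu+1}(t).$$
Setting $F_{\v,\mu}(t) = E_\v(t) + (\v/\lambda)\var' E^{\mu+1}(t)$ and using \eqref{eeq1} exactly as in \eqref{wld070}--\eqref{wld08} to bound $E^{\mu+1}$ below by a multiple of $F_{\v,\mu}^{\mu+1}$, we obtain $F_{\v,\mu}'(t) \leq -\gamma\var' F_{\v,\mu}(t)^{\mu+1}$. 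Lemma \ref{ndi} with $\beta = \mu_p$ and $\var(t) = \log(2+t)-\log 2$ then gives $F_{\v,\mu}(t) \leq C(1 + \gamma\mu_p F_{\v,\mu}(0)^{\mu_p}\log(2+t))^{-1/\mu_p}$, and the equivalence of $E$ and $F_{\v,\mu}$ delivers \eqref{pene01}.

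**The main obstacle** — really the only point requiring a remark rather than a copy-paste — is verifying that the well-posedness and energy-regularity machinery of Section \ref{prelim} transfers to the plate operator: one needs that weak solutions of \eqref{hse1} are limits of strong solutions, that the energy identity $E'(t) = -c\int_\Om|y_t|^p$ holds a.e. (which follows as in \cite{h4} since the damping $v \mapsto c|v|^{p-2}v$ is odd), and that the $L^p$-regularity statement (the plate analogue of Proposition \ref{preg}) holds; all of these are routine because $\Delta^2$ with the hinged boundary conditions $y = \Delta y = 0$ generates, just like $-\Delta$ with Dirichlet conditions, a positive self-adjoint operator on $L^2(\Om)$ with compact resolvent, so the abstract second-order theory of \cite{bre1,bre2,hab} applies unchanged. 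Everything downstream of that — the perturbed-energy computation, the Young-inequality absorptions, and the final appeal to Lemma \ref{ndi} — is mechanically the same as in the proof of Theorem \ref{stab}, with $(N-4)p > 2N$ replacing $(N-2)p > 2N$ only as the (now irrelevant, since we never use Sobolev embedding) marker of the supercritical regime.
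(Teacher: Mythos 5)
Your proposal is correct and follows exactly the route the paper intends: Theorem \ref{stabp1} is stated in the extensions section without an explicit proof precisely because it is obtained by transplanting the argument of Theorem \ref{stab} to the hinged plate operator, and you correctly identify the only points needing verification (the sign of the elastic term after integrating $\int_\Om y\,\Delta^2 y$ by parts under $y=\Delta y=0$, the Poincar\'e-type bound $|y|_2\leq C|\Delta y|_2$, and the fact that the $L^p$ bound of Proposition \ref{preg} uses only the dissipation identity and hence carries over verbatim). Nothing further is needed.
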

\subsection{A plate equation with a nonlinear structural damping.} A few years ago the second author of this work considered the
following damped Euler-Bernoulli
equation \cite{tebpnd}:
\begin{equation}\label{se1}\left\{ \begin{array}{lll}&y_{tt}+\Delta^2 y- \text{div}(a|\nabla y_t|^{p-2}\nabla y_t)   =
0\hbox{  in }
(0,\infty)\times\Omega \cr &y={\p y\over\p\nu}   =  0      \hbox{ on }
(0,\infty)\times{\Gamma}\cr &{y}(0)   = y^0,
\quad {y'}(0)   =  y^1      \hbox{ in }        \Omega.\cr
\end{array}\right.\end{equation}
 System \eqref{se1} corresponds to the clamped plate equation with structural damping
\cite{cru} when $a\equiv1$, $p=2$, and $N=2$; thus \eqref{se1} is a nonlinear
generalization of the mathematical model proposed by Russell and Chen in their
work. \\ The energy of this system is given by
$$E(t)=\frac{1}{2}\int_\Om\{|y_t(t,x)|^2+|\Delta y(t,x)|^2\,dx,\quad\forall t\geq0,$$and it is a nonincreasing function of the time variable $t$ as we have 
$$E'(t)=-\int_\Om a(x)|\nabla y_t(t,x)|^{p}\,dx,\text{ for a.e. }t>0.$$
The damping was then localized in a convenient subdomain, and several stabilization results were established, \cite{tebpnd}. The following result was also established in the case of a globally distributed damping:
\begin{thm}\cite{tebpnd} Suppose that $a\equiv1$ in $\Om$. Let $p>2$ further satisfy:
$$(N-2)p<3N-4,\hbox{ and }(N-2)p\leq 2N.$$ Then every weak solution of
\eqref{se1} satisfies
$$E(t)\leq K(E(0))(1+t)^{-{2\over p-2}},\quad\forall
t\geq0,$$where $K$ is a positive constant depending on the
initial data as indicated, and also depends on the parameters of the system.

\end{thm}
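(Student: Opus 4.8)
The plan is to run the perturbed‑energy computation of the proof of Theorem \re{stab0}, now for the system \eqre{se1}; since we are in the subcritical/critical range $(N-2)p\le 2N$ the argument is the classical one and, unlike Theorem \re{stab0}, it does not even need the higher‑order energy $F$. Write $E$ for the energy of \eqre{se1}, so that $E'(t)=-\int_\Om|\nabla y_t|^p\,dx$, fix $\mu:=\tfrac{p-2}{2}>0$, and for a small $\v>0$ put $E_\v(t)=E(t)+\v E(t)^{\mu}\int_\Om yy_t\,dx$. Because $y(t)\in H^2(\Om)\cap H_0^1(\Om)$ with $\partial_\nu y(t)=0$, Poincar\'e's inequality gives $|y|_2\le C|\Delta y|_2$, hence $\big|\int_\Om yy_t\big|\le CE(t)$ and, for $\v$ small, $E_\v\sim E$. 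I would first carry out everything for strong solutions, whose existence and whose differential energy identity follow from the analogues of Propositions \re{ss}--\re{wp} mentioned in the text, and treat the weak case at the very end by approximation, using the uniqueness--stability of weak solutions (as limits of strong ones) and the absolute continuity of $t\mapsto E(t)$ together with the a.e.\ identity $E'=-\int_\Om|\nabla y_t|^p\,dx$, exactly as in the proof of Theorem \re{stab}.

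Differentiating $E_\v$, inserting \eqre{se1}, and integrating by parts with the clamped conditions $y=\partial_\nu y=0$ (so that $\int_\Om y\Delta^2 y=|\Delta y|_2^2$ and $\int_\Om y\,\text{div}(|\nabla y_t|^{p-2}\nabla y_t)=-\int_\Om|\nabla y_t|^{p-2}\nabla y_t\cdot\nabla y$) yields
$$E_\v'=E'-2\v E^{\mu+1}+2\v E^{\mu}\!\int_\Om|y_t|^2-\v E^{\mu}\!\int_\Om|\nabla y_t|^{p-2}\nabla y_t\cdot\nabla y+\mu\v E^{\mu-1}E'\!\int_\Om yy_t .$$
The last term is $\le C\v E(0)^{\mu}|E'|$. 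For the kinetic term, Poincar\'e and H\"older on the bounded domain give $\int_\Om|y_t|^2\le C|\nabla y_t|_p^2=C|E'|^{2/p}$, and since $2\mu-(p-2)=0$ this equals $C\v E^{(\mu+1)(p-2)/p}|E'|^{2/p}$, which Young's inequality (exponents $\tfrac{p}{p-2},\tfrac p2$) dominates by $\delta\v E^{\mu+1}+C_\delta\v|E'|$. For the damping--displacement term, H\"older gives $\big|\int_\Om|\nabla y_t|^{p-2}\nabla y_t\cdot\nabla y\big|\le|\nabla y_t|_p^{p-1}|\nabla y|_p=|E'|^{(p-1)/p}|\nabla y|_p$, and this is the only place the hypothesis $(N-2)p\le 2N$ is used: it provides the Sobolev embedding $H^2(\Om)\cap H_0^1(\Om)\hookrightarrow W^{1,p}(\Om)$, whence $|\nabla y|_p\le C|\Delta y|_2\le CE^{1/2}$, so that this term is $\le C\v E^{\mu+1/2}|E'|^{(p-1)/p}$, again split by Young's inequality (exponents $p,\tfrac{p}{p-1}$, using $(\mu+\tfrac12)p\ge\mu+1$, true for $p>2$) into $\delta\v E^{\mu+1}+C_\delta\v|E'|$. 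Choosing $\delta$ and then $\v$ small enough (the smallness depending on $E(0)$), all the $E^{\mu+1}$ contributions are absorbed into $-2\v E^{\mu+1}$ and all the $|E'|$ contributions into $E'\le0$, so that $E_\v'(t)\le-\v E(t)^{\mu+1}\le-\beta E_\v(t)^{\mu+1}$ a.e., with $\beta=\beta(\Om,N,p,E(0))>0$, the last step using $E\sim E_\v$.

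It then suffices to apply Lemma \re{ndi} with $\var(t)=t$ and exponent $\mu=\tfrac{p-2}{2}>0$, which gives $E_\v(t)\le E_\v(0)\big(1+\mu\beta E_\v(0)^{\mu}t\big)^{-1/\mu}$, hence $E(t)\le K(E(0))(1+t)^{-2/(p-2)}$ by equivalence; for a general weak solution one takes strong solutions $y_n$ with data converging in $\big(H^2(\Om)\cap H_0^1(\Om)\big)\times L^2(\Om)$, applies this bound to each $y_n$ with a constant continuous in $E(y_n;0)\to E(0)$, and passes to the limit in $C([0,T];H^2_0(\Om))\cap C^1([0,T];L^2(\Om))$. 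As this makes clear, the decay proof proper is a routine weighted‑multiplier computation whose only structural input is the embedding $H^2\hookrightarrow W^{1,p}$, i.e.\ $(N-2)p\le 2N$ — precisely what fails in the supercritical regime studied in the rest of the paper. The genuine difficulty hidden behind the statement — and the reason for the extra hypothesis $(N-2)p<3N-4$, which is not used in the estimate above — is the well‑posedness theory for \eqre{se1}: the degenerate $p$‑Laplacian structural damping must be reconciled with the biharmonic principal part and with the approximation scheme defining weak solutions, and it is that analysis (carried out in \cite{tebpnd}) which must be in place for the class of weak solutions in the statement to be non‑empty and stable.
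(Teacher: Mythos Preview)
The paper does not supply its own proof of this statement: the theorem is quoted verbatim from \cite{tebpnd} as background in Section 7.3, with no argument given. Your perturbed-energy computation is a correct sketch of the classical subcritical argument and is exactly the template that the proofs of Theorems \re{stab0} and \re{stab} in this paper extend; in particular, the multiplier $\v E^{\mu}\int_\Om yy_t\,dx$, the H\"older/Young treatment of the kinetic and damping--displacement terms, and the final ODE $E_\v'\le-\beta E_\v^{\mu+1}$ match Section~5 step for step, with the Sobolev bound $|\nabla y|_p\le C|\Delta y|_2$ (valid precisely when $(N-2)p\le 2N$) playing the role of the estimate \eqref{wld50} that is unavailable in the supercritical regime. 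Your observation that the additional restriction $(N-2)p<3N-4$ does not enter the decay estimate itself and is instead tied to the well-posedness analysis of \cite{tebpnd} is also consistent with how the paper presents the result.
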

For this new system, if one drops the restrictions on the parameter $p$, namely choosing $p$ with $p(N-2)>2N$, and $y^0\in H_0^2(\Om)\cap W^{1,p}(\Om)$, $y^1\in L^2(\Om)$, then using the technique devised to prove Theorem \re{stab}, one can show that the energy of the corresponding weak solution of \eqref{se1} satisfies
\begin{equation}\label{se2}
E(t)\leq
{K}{(\log(2+t))^{-{1\over \mu_p}}},\quad\forall t\geq0,\end{equation}where $\mu_p=\max\{\frac{p-2}{2}, \frac{1}{p-1}\}$, and $K=K(\Om,N, E(0), p,| y^0|_p)$ is a positive constant.  

  \vskip.3cm\noindent

\end{document}